\documentclass[a4paper,12pt]{amsart}
\usepackage{graphicx,amssymb}
\usepackage{txfonts}
\usepackage{mathrsfs}
\usepackage{amssymb,amsmath,amsthm,color}
\usepackage{graphicx,mcite}
\usepackage{hyperref}
\usepackage{url}
\usepackage{setspace}

 \textwidth5.5in
 \textheight8.5in
 \hoffset-0.3in	


\vfuzz2pt 
\hfuzz2pt 

\newtheorem{theorem}{Theorem}[section]

\newtheorem{lemma}[theorem]{Lemma}
\newtheorem{definition}[theorem]{Definition}

\newtheorem{remark}[theorem]{Remark}
\newtheorem{corollary}[theorem]{Corollary}

\def\be#1 {\begin{equation} \label{#1}}
\newcommand{\ee}{\end{equation}}

\def\sqw{\hbox{\rlap{\leavevmode\raise.3ex\hbox{$\sqcap$}}$%
\sqcup$}}
\def\findem{\ifmmode\sqw\else{\ifhmode\unskip\fi\nobreak\hfil
\penalty50\hskip1em\null\nobreak\hfil\sqw
\parfillskip=0pt\finalhyphendemerits=0\endgraf}\fi}

\newcommand{\R}{{\mathbb {R}}}

\setcounter{tocdepth}{2}
\author{K. Jotsaroop, Saurabh Shrivastava, Kalachand Shuin}
\address{
}
\email{}
\address{
Department of Mathematics\\
Indian Institute of Science Education and Research Bhopal\\
Bhopal-462066, India}
\email{saurabhk@iiserb.ac.in} 
\email{kalachand16@iiserb.ac.in}

\address{Department of Mathematics, Indian Institute of Science Education and Research, Mohali  India}
\email{jotsaroop@iisermohali.ac.in}
\keywords{Bilinear maximal function, Bilinear Muckenhoupt weights, Extrapolation theory, Bilinear Bochner-Riesz operator}
\subjclass{42B20, 42B25}

\begin{document}

\title[Bilinear Bochner-Riesz means at the critical index]{Weighted estimates for Bilinear Bochner-Riesz means at the critical index}

\begin{abstract}
In this paper we establish weighted estimates for the bilinear Bochner-Riesz operator $\mathcal B^{\alpha}$ at the critical index $\alpha=n-\frac{1}{2}$ with respect to bilinear weights. 
\end{abstract}

\maketitle
\section{Introduction}
\subsection{Bochner-Riesz means}
The study of linear Bochner-Riesz means is a vast subject and originates from the classical problem of summability of Fourier series. The $n-$dimensional Bochner-Riesz operator of order $\alpha\geq 0$ acting on a Schwarz class function $f\in S(\R^n)$ is defined by
\begin{eqnarray*}
\mathcal {S}^{\alpha}(f)(x):=\int_{\mathbb{R}^{n}}(1-|\xi|^{2})^{\alpha}_{+}\hat{f}(\xi)e^{2\pi \iota x\cdot\xi}d\xi,~~x\in \R^n, \\
\end{eqnarray*}
where $x\cdot y$ denotes the standard inner product in $\R^n$, $r_+=r$ if $r> 0$ and $r_+=0$ if $r\leq 0$. 
The study of $L^p$ boundedness properties of the operator $\mathcal {S}^{\alpha}$ has been a central theme in Harmonic analysis. The Bochner-Riesz conjecture is one of the outstanding open problems in the subject. The conjecture concerns finding the best possible range of exponents $p$, for a given index  $\alpha$, for which the operator $\mathcal {S}^{\alpha}$ is bounded on  $L^p(\R^n)$. The problem has been studied by many mathematicians and is well understood in dimension $n=1,2$. However, it remains open to date in dimension $n\geq 3$. We refer the reader to \cite{B1,B2,F1,F2, Lee2, Lee1} and references therein for specific details. 

Note that the convolution kernel of the Bochner-Riesz operator $\mathcal {S}^{\alpha}$ is given by 
\begin{eqnarray*}
K_{\alpha}(x)=c_{\frac{n}{2}+\alpha} \frac{ J_{\alpha+\frac{n}{2}} (2\pi |x|)} {|x|^{\alpha+\frac{n}{2}}},~x\in \R^n,
\end{eqnarray*}
where $J_{\alpha+\frac{n}{2}}$ denotes the standard Bessel function of order $\alpha+\frac{n}{2}$. Using the properties of Bessel functions, it is easily verified that for $\alpha>\frac{n-1}{2}$, the kernel $K_{\alpha}$ is an integrable function. As a consequence of this, the $L^p$ boundedness, $1\leq p\leq \infty,$ of the operator $\mathcal {S}^{\alpha}$ follows immediately. However, for $0\leq \alpha\leq \frac{n-1}{2}$ the problem is known to be more difficult. The case $\alpha=0$ is referred to as the ball multiplier problem. In~\cite{F2}, C. Fefferman proved that the ball multiplier operator is unbounded on $L^p(\R^n), n\geq 2,$ for $p\neq 2.$ 

The index $\alpha= \frac{n-1}{2}$ is commonly referred to as the critical index for the Bochner-Riesz problem. The Bochner-Riesz operator $\mathcal {S}^{\frac{n-1}{2}}$ has a close connection with rough singular integral operators. The operator $\mathcal {S}^{\frac{n-1}{2}}$ is bounded on $L^p(\R^n)$ for $1<p<\infty$ and is of weak-type $(1,1)$, see~\cite{MC,Di,Seeger,St1,AV} for details. The literature is vast and here we do not attempt to provide an exhaustive account of the subject.  We only discuss the results which directly concern the current paper. 

Next, we move on to discuss the weighted boundedness of the Bochner-Riesz operator $\mathcal {S}^{\frac{n-1}{2}}$. In order to describe the results, we need to recall the notion of Muckenhoupt weights. 

For a given $1<p<\infty$, the Muckhenhoupt class of weights $A_p$ consists of all non-negative locally integrable functions $w$ satisfying
$$
[w]_{A_p}:=\sup_B \bigg(\fint_B w \bigg)\bigg(\fint_B w^{1-p'} \bigg)^{p-1} <\infty,
$$
where the supremum ranges over all balls $B$ in $\R^n$. Here $p'=\frac{p}{p-1}$ denotes the conjugate index to $p$ and $\fint_B f:=\frac{1}{|B|}\int_B |f(y)|dy$ is the average of $f$ over ball $B.$ 

For $p=1,$ the class $A_1$ consists of all weights $w$ such that 
$$
[w]_{A_1}:={\it{ess}sup}~ \frac{M (w)}{w}<\infty.
$$
Here $M$ denotes the classcial Hardy-Littlewood maximal operator defined by 
$$
M(f)(x):=\sup_{B:x\in B} \fint_B f.
$$
The constant $[w]_{A_p}, 1\leq p<\infty,$ is referred to as the $A_p$ characteristic constant of the weight $w$.
 
In~\cite{XQ},~X. Shi and Q. Sun proved  weighted $L^p$ estimates for the operator $\mathcal {S}^{\frac{n-1}{2}}$ for $1<p<\infty$ with respect to $A_p$ weights. In fact, they obtained weighted estimates for the maximal Bochner-Riesz operator at the critical index. Later, in~\cite{AV}, A. Vargas proved weighted weak-type estimates for the operator $\mathcal {S}^{\frac{n-1}{2}}$ at the end-point $p=1$ with respect to $A_1$ weights. There have been recent developments on the problem in connections with the sparse domination principle and sharp weighted bounds. We would like to refer the interested reader to ~\cite{Di} for more details.  

In recent times, there has been some progress on the Bochner-Riesz problem in the bilinear setting. Motivated by these works,  in this article we address the question of weighted boundedness of the bilinear Bochner-Riesz operator at the critical index with respect to bilinear weights. In the next section we discuss the bilinear Bochner-Riesz operators in detail. 

\subsection{Bilinear Bochner-Riesz means} 

The bilinear Bochner-Riesz operator of order  $\alpha\geq 0$ in $\R^n,~n\geq 1$, is the bilinear multiplier operator defined by  
\begin{eqnarray*}
\mathcal {B}^{\alpha}(f,g)(x)&:=&\int_{\mathbb{R}^{n}}\int_{\mathbb{R}^{n}}(1-\vert \xi\vert^{2}-\vert \eta\vert^{2})^{\alpha}_{+}\hat{f}(\xi)\hat{g}(\eta)e^{2\pi \iota x\cdot(\xi+\eta)}d\xi d\eta\\
& := &\int_{\mathbb{R}^{n}}\int_{\mathbb{R}^{n}}K_{\alpha}(y,z)f(x-y)g(x-z) dydz,
\end{eqnarray*} 
where as in the previous section 
$$K_{\alpha}(y,z)=\int_{\mathbb{R}^{n}}\int_{\mathbb{R}^{n}}(1-\vert \xi\vert^{2}-\vert \eta\vert^{2})^{\alpha}_{+}e^{2\pi \iota (y\cdot\xi+z\cdot\eta)}d\xi d\eta=c_{n+\alpha} \frac{ J_{\alpha+n} (2\pi |(y,z)|)} {|(y,z)|^{\alpha+n}},~y,z\in \R^n.$$
The bilinear Bochner-Riesz operators are natural analogues of the classical Bochner-Riesz operators. They have their origin in the study of double Fourier series and in general, in the study of non-linear PDE's, see~\cite{tao} for more details. It is natural to investigate the boundedness properties of the bilinear Bochner-Riesz operators from $L^{p_1}(\R^n)\times L^{p_2}(\R^n)$ into $L^p(\R^n)$ for $1\leq p_1,p_2\leq \infty$ satisfying the H\"{o}lder relation $\frac{1}{p_1}+\frac{1}{p_2}=\frac{1}{p}$ and the corresponding weighted analogues.  The bilinear problem is often significantly more difficult than its linear counterpart. There are a few recent papers in this direction. We provide here  a brief survey of known results. 

In what follows, we shall always assume that the exponents $p_1,p_2$ and $p$ satisfy the above H\"{o}lder relation, unless mentioned otherwise. 

In dimension $n=1$, the bilinear Bochner-Riesz problem is fairly well understood in~\cite{Bern1}, see Theorem 4.1 for precise details. In~\cite{Bern1}, F. Bernicot et al. obtained several positive and negative results for the bilinear Bochner-Riesz operator for $\alpha>0$. Note that the case $\alpha=0$ corresponds to the bilinear analogue of the ball multiplier problem. In a huge  contrast with the previously mentioned ball multiplier result due to C. Fefferman, in~\cite{GLi}, L. Grafakos and X. Li proved that the bilinear ball multiplier operator is bounded from $L^{p_1}(\R)\times L^{p_2}(\R)$ into $L^p(\R)$ in the strict local $L^2$ range:  $2< p_1,p_2,p'<\infty.$ The problem remains open for exponents $p_1,p_2$ and $p$ lying outside the local $L^2$ range, i.e., exactly one of $p_1,p_2,p'$ is less than $2$. However,  the bilinear ball multiplier operator in $\R^n,~n\geq 2,$ outside the local $L^2$ range is shown to be unbounded  in~\cite{DG} by G. Diestel and L. Grafakos. 

As in the linear case, it is easily verified that $\alpha=n-\frac{1}{2}$ is the critical index for the bilinear Bochner-Riesz problem. For $\alpha>n-\frac{1}{2}$, the operator $\mathcal {B}^{\alpha}$ maps $L^{p_1}(\R^n)\times L^{p_2}(\R^n)$ into $L^p(\R^n)$ for all $1\leq p_1,p_2\leq \infty$ satisfying the H\"{o}lder relation, see~\cite{Bern1} for instance. There are several positive results known in the literature on the boundedness of the operator with $\alpha$ below the critical index. Since our concern in this paper is to investigate the boundedness of the bilinear Bochner-Riesz operator $\mathcal {B}^{n-\frac{1}{2}}$, we do not get into the precise statements of all the known results as they are highly technical to describe. In particular, the following results are known.  
\begin{itemize}
\item ~\cite{Bern1} $\mathcal {B}^{\alpha}$ is bounded from $L^{2}(\mathbb{R}^{n})\times L^{2}(\mathbb{R}^{n})$ to $L^{1}(\mathbb{R}^{n})~$ for all $n\geq 1$ and $\alpha>0$,  
\item \cite{Lee2} $\mathcal {B}^{\alpha}$ is bounded from $L^{p_1}(\mathbb{R}^{2})\times L^{p_2}(\mathbb{R}^{2})$ to $L^{p}(\mathbb{R}^{2})$ for all $2\leq p_1,p_2\leq 4$ and $\alpha>0$. 
\end{itemize}
\begin{remark}
We would like to remark that in~\cite{Bern1, Lee2}  the authors have proved many other results.  Here we have described only a few selected statements for our convenience. Therefore, we refer to~\cite{Bern1,Lee2,HM} for a detailed account of known results in this direction. Further, we would like to refer to ~\cite{DHe,Gra2, EJeong} for the study of boundedness properties of the maximal bilinear Bochner-Riesz operators.
\end{remark} 
 
The bilinear (or multi-linear in general) analogues of the classical Muckenhoupt weights were systematically developed in~\cite{Ler1} by A.K. Lerner et al.. Since then, several authors have studied the weighted estimates for important classes of bilinear operators with respect to these bilinear weights. In this direction, we address the question of weighted estimates for the operator $\mathcal {B}^{n-\frac{1}{2}}$. Our method of proof exploits the ideas presented in~\cite{XQ, Bern1}. We use the structure of bilinear weights, see~\cite{Ler1}, along with  powerful extrapolation theorem for multi-linear weights proved in~~\cite{KJS, KJHS} and \cite{Bas}. 

Let us briefly recall the notion of Hardy-Littlewod maximal function and weights in the bilinear setting.

\subsection{Bilinear maximal function and weights}\label{pre}
We shall discuss the results in this section in bilinear setting only for notational convenience. However, we remark that the corresponding results are known in the general multi-linear setting. 

 Let $f_{1},f_{2}\in L^{1}_{loc}(\mathbb{R}^{n})$ be locally integrable functions. The bilinear Hardy-Littlewood maximal function $\mathcal{M}(f_{1},f_2)$ is defined by 
$$\mathcal{M}(f_{1},f_{2})(x)=\sup_{r>0}\prod^{2}_{j=1}\fint_{B(x,r)} f_{j} ,$$
 where the supremum is taken over all balls centered at $x\in \R^n$. 
 
The un-weighted $L^p$ estimates for the operator $\mathcal M$ immediately follow from the corresponding estimates for the classical Hardy-Littlewood maximal operator and H\"{o}lder's inequality. In~\cite{Ler1}, A. K. Lerner et al.  gave a complete characterization of the class of weights for which the bilinear operator $\mathcal M$ is bounded. Let us recall the notion of bilinear weights.  

\begin{definition}(Definition 3.5, \cite{Ler1})  \\ 
Let $1\leq p_{1},p_{2}< \infty $ and $\frac{1}{p}=\frac{1}{p_{1}}+\frac{1}{p_{2}}$. Denote $\vec{P}=(p_{1},p_{2})$.  Given a pair of weight functions      
$\vec{\omega}=(\omega_{1},\omega_{2})$, set $$v_{\omega}=\prod^{2}_{j=1}\omega^{\frac{p}{p_{j}}}_{j}$$
We say that $\vec{\omega}$ satisfies the bilinear $A_{\vec{P}}$ condition and write $\vec{\omega} \in A_{\vec{P}}$ if $$[\vec{\omega}]_{A_{\vec{P}}}:=\sup_{B}\left(\fint_{B}v_{\omega}\right)^{\frac{1}{p}} \prod^{2}_{j=1}\left(\fint_{B}\omega^{1-p'_{j}}_{j}\right)^{\frac{1}{p'_{j}}}<\infty.$$
When $p_{j}=1$, the quantity $\left(\fint_{B}\omega^{1-p'_{j}}_{j}\right)^{\frac{1}{p'_{j}}}$ has the standard interpretation  as $(\inf_{B}\omega_{j})^{-1}$. 
\end{definition}
\begin{theorem}(Theorem 3.7, \cite{Ler1})\label{bmf}
For $1<p_1,p_2<\infty$, the operator $\mathcal M$ is bounded from  $L^{p_{1}}(\omega_{1})\times L^{p_{2}}(\omega_{2})\rightarrow L^{p}(v_{\omega})$ if and only if $\vec{\omega} \in A_{\vec{P}}.$
\end{theorem}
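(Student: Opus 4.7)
The statement is a bi-implication and I would treat the two directions separately.

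For necessity, I would use a direct testing argument. Fix a ball $B$ and choose the pair $f_j = \chi_B\omega_j^{1-p_j'}$, $j=1,2$ (with the usual convention when $p_j=1$). For every $x\in B$ the ball $B$ itself enters the supremum defining $\mathcal{M}(f_1,f_2)(x)$, so
$$
\mathcal{M}(f_1,f_2)(x)\ \geq\ \prod_{j=1}^{2}\fint_B \omega_j^{1-p_j'}.
$$
Integrating the $p$-th power against $v_\omega$ over $B$ and using the identity $(1-p_j')p_j+1=1-p_j'$ to compute $\|f_j\|_{L^{p_j}(\omega_j)}^{p_j}=\int_B\omega_j^{1-p_j'}$, I feed this pair into the assumed boundedness of $\mathcal{M}$. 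The $|B|$-factors collapse using $\sum_{j}1/p_j' + 1/p = 2$, and what remains is exactly the $A_{\vec{P}}$ inequality.

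For sufficiency, my plan is to linearize each ball-average via Hölder's inequality and reduce to scalar weighted theory. Writing $f_j = (f_j\omega_j^{1/p_j})\omega_j^{-1/p_j}$ and applying Hölder with exponents $p_j$ and $p_j'$ gives
$$
\fint_B f_j\ \leq\ \left(\fint_B f_j^{p_j}\omega_j\right)^{1/p_j}\left(\fint_B \omega_j^{1-p_j'}\right)^{1/p_j'}.
$$
Multiplying over $j=1,2$ and absorbing the second product via the $A_{\vec{P}}$ hypothesis as $[\vec{\omega}]_{A_{\vec{P}}}(\fint_B v_\omega)^{-1/p}$, then splitting $(\fint_B v_\omega)^{-1/p}=\prod_{j}(\fint_B v_\omega)^{-1/p_j}$ (which uses $\sum_{j} 1/p_j = 1/p$), I arrive at the pointwise bound
$$
\mathcal{M}(f_1,f_2)(x)\ \leq\ [\vec{\omega}]_{A_{\vec{P}}}\prod_{j=1}^{2}\left[M_{v_\omega}\!\left(\frac{f_j^{p_j}\omega_j}{v_\omega}\right)\!(x)\right]^{1/p_j},
$$
where $M_{v_\omega}$ denotes the uncentered Hardy--Littlewood maximal operator with respect to the measure $v_\omega\,dx$.

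The remaining, and in my view most delicate, task is to pass from this pointwise bound to the $L^p(v_\omega)$ estimate. The decisive structural input is that the $A_{\vec{P}}$ condition forces $v_\omega$ to be doubling (indeed $v_\omega\in A_{2p}$, by the characterization of bilinear Muckenhoupt weights), so $M_{v_\omega}$ is bounded on $L^q(v_\omega)$ for every $q>1$. After taking the $p$-th power, integrating against $v_\omega$, and applying Hölder's inequality with exponents $p_1/p$ and $p_2/p$, matters reduce to controlling $M_{v_\omega}$ applied to $f_j^{p_j}\omega_j/v_\omega$ in an $L^1(v_\omega)$-norm, which is precisely the endpoint at which $M_{v_\omega}$ is only weak-type. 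The main obstacle is therefore to perturb the exponents slightly above $1$; this is where I would invoke the self-improvement (openness) property of $A_{\vec{P}}$, giving a reverse Hölder gain on both $v_\omega$ and each $\omega_j^{1-p_j'}$ that absorbs the loss. Balancing these small gains against the Hölder exponents is, in my view, the technical heart of the argument.
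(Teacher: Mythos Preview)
The paper does not prove this theorem. It is quoted verbatim as Theorem~3.7 of \cite{Ler1} and used as a black box; no argument is given or even sketched. So there is no ``paper's own proof'' to compare against.

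On the merits of your proposal: the necessity direction is the standard testing argument and is correct. For sufficiency, your linearization up to the pointwise bound
\[
\mathcal{M}(f_1,f_2)(x)\ \leq\ [\vec{\omega}]_{A_{\vec P}}\prod_{j=1}^{2}\Big[M_{v_\omega}\Big(\tfrac{f_j^{p_j}\omega_j}{v_\omega}\Big)(x)\Big]^{1/p_j}
\]
is fine, but the step after it is where the argument breaks. Taking the $L^p(v_\omega)$ norm and applying H\"older with exponents $p_j/p$ lands each factor exactly in $L^1(v_\omega)$, where $M_{v_\omega}$ is only weak-type; so as written you obtain $\mathcal{M}:L^{p_1}(\omega_1)\times L^{p_2}(\omega_2)\to L^{p,\infty}(v_\omega)$, not the strong-type bound. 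You acknowledge this and propose to rescue it via ``openness of $A_{\vec P}$'' and reverse H\"older, but you do not carry this out, and it is not a routine bookkeeping step: perturbing the H\"older exponents changes the dual weight powers $\omega_j^{1-p_j'}$ and the product weight $v_\omega$ simultaneously, and one must check that the $A_{\vec P}$ structure survives in a form that still closes the estimate. In \cite{Ler1} the strong-type bound for $\mathcal{M}$ is obtained by a genuinely different route (a direct argument exploiting the structural characterization $\sigma_j=\omega_j^{1-p_j'}\in A_{2p_j'}$, $v_\omega\in A_{2p}$, rather than this reduction to $M_{v_\omega}$ on $L^1$), precisely because the endpoint obstruction you hit is real. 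So the ``technical heart'' you defer is in fact the whole difficulty of the sufficiency direction, and until it is written out your argument proves only the weak-type statement.
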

We also refer to a recent paper~\cite{Bas} for a different formulation of multilinear weights, where the following notion of weights is considered. 
\begin{definition}[Definition $2.1$,\cite{Bas} ]
	\label{wNier}
	Let $\vec{r}=(r_1,r_2)$, $\vec{P}=(p_1,p_2)$ with $r_1,r_2\in (0,\infty)$ and $p_1, p_2\in (0,\infty]$. Let $p$ be given by $\frac{1}{p}=\frac{1}{p_1}+\frac{1}{p_2}$. We say $(\vec{r},s)\leq \vec{P}$ if $\vec{r}\leq \vec{P}$ and $p\leq s$ where $s\in (0,\infty]$. Here $\vec{r}\leq \vec{P}$ means that $r_j\leq p_j$, $j=1,2$. For weights $w_1, w_2$ write $w=\prod_{j=1}^2w_j$. We say that $\vec{w}=(w_1,w_2) \in A_{\vec{P},(\vec{r},s)}$ if 
	\begin{equation*}
	[\vec{w}]_{\vec{P},(\vec{r},s)}:=\sup_{Q} \Big(\prod_{j=1}^2 \langle w^{-1}_j\rangle_{\frac{1}{\frac{1}{r_j}-\frac{1}{p_j}}, Q} \langle w\rangle_{\frac{1}{\frac{1}{p}-\frac{1}{s}}, Q}\Big) <\infty,
	\end{equation*}
	where the supremum in the above is taken over all cubes (with sides parallel to coordinate axes) in $\mathbb R^n$. Here we have used the notation  $\langle f\rangle_{p, Q}=\left(\frac{1}{|Q|}\int_Q|f|^p\right)^{\frac{1}{p}}, 0<p<\infty$ and $\langle f\rangle_{\infty, Q}=\it{ess}sup_{x\in Q}|f(x)|.$
\end{definition} 
\begin{remark}
	Note that the case $p_{j}=\infty$ is included in the definition above. For $p_{j}=\infty$, the norm is interpreted as $\Vert f_{j}\Vert_{L^{p_{j}}(\omega^{p_{j}}_{j})}=\Vert f_{j}\omega_{j}\Vert_{L^{\infty}(\mathbb{R}^{n})}$. The weight class $A_{\vec{P},(\vec{r},s)}$ is equivalent to the weight class $A_{\vec{P}}$ with $\vec{P}=(p_{1},p_{2})$, when $\vec{r}=(1,1)$ and $s=\infty$. We have $(\omega^{p_{1}}_{1},\omega^{p_{2}}_{2})\in A_{\vec{P}}$ if and only if $\vec{\omega}=(\omega_{1},\omega_{2})\in A_{\vec{P},((1,1),\infty)}$. 
\end{remark}
\subsection{Main theorem}\label{pf}
The main result of this paper is the following. 
\begin{theorem} \label{mainthm} The bilinear Bochner-Riesz operator $\mathcal {B}^{n-\frac{1}{2}}$ is bounded from  $L^{p_{1}}(\omega_{1})\times L^{p_{2}}(\omega_{2})\rightarrow L^{p}(v_{\omega})$ for all $\vec{\omega}\in A_{\vec{P}}$ with $1<p_{1}, p_{2} \leq \infty$ and $\frac{1}{p_{1}}+\frac{1}{p_{2}}=\frac{1}{p}$. 
\end{theorem}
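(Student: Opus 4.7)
\medskip
\noindent\textbf{Plan.} The strategy is to decompose the multiplier $(1-|\xi|^{2}-|\eta|^{2})_{+}^{n-1/2}$ dyadically near the critical sphere $S^{2n-1}\subset \R^{2n}$, establish a weighted $L^{p_{1}}(\omega_{1})\times L^{p_{2}}(\omega_{2})\to L^{p}(v_{\omega})$ bound at a single convenient base tuple with summable decay for each dyadic piece, and then upgrade to the full range via multilinear extrapolation. Concretely, by the extrapolation theorem for multilinear weights from \cite{KJHS, Bas} (the latter handling the endpoint $p_{j}=\infty$ via the classes $A_{\vec{P},(\vec{r},s)}$), it suffices to prove the weighted estimate for one fixed exponent pair, for example $p_{1}=p_{2}=2$ and $p=1$, for every $\vec{\omega}\in A_{\vec{P}}$.

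\medskip

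To set up the decomposition, I would fix $\psi\in C_{c}^{\infty}(\R)$ supported in $[1/4,1]$ with $\sum_{k\geq 0}\psi(2^{k}t)=1$ on $(0,1]$, and write $\mathcal{B}^{n-1/2}=\sum_{k\geq 0}T_{k}$, where $T_{k}$ has symbol
$$
m_{k}(\xi,\eta)=\psi\!\big(2^{k}(1-|\xi|^{2}-|\eta|^{2})\big)\,(1-|\xi|^{2}-|\eta|^{2})_{+}^{n-1/2}.
$$
Each $m_{k}$ is a smooth symbol supported in a $2^{-k}$-neighbourhood of $S^{2n-1}$ with $\|m_{k}\|_{\infty}\sim 2^{-k(n-1/2)}$. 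Stationary phase (equivalently, the Bessel asymptotics already used in computing $K_{\alpha}$) yields kernel size estimates of the form $|K_{k}(y,z)|\lesssim 2^{-k(n-1/2)}(1+|(y,z)|)^{-(2n-1)/2}$ with rapid decay outside the ball $|(y,z)|\lesssim 2^{k}$.

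\medskip

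The main work is to prove a weighted $L^{2}(\omega_{1})\times L^{2}(\omega_{2})\to L^{1}(v_{\omega})$ bound for each $T_{k}$ with a decay factor $2^{-\varepsilon k}$ for some $\varepsilon>0$. Following the approach of Shi--Sun \cite{XQ} adapted to the bilinear setting as in \cite{Bern1}, I would split $T_{k}$ further into spatial pieces supported in annuli $|(y,z)|\sim 2^{j}$ with $0\leq j\lesssim k$, dominate each piece by a localised bilinear operator comparable to the bilinear Hardy--Littlewood maximal function $\mathcal{M}$, and sum in $j$. The weighted boundedness of $\mathcal{M}$ from Theorem \ref{bmf}, together with the structural properties of $A_{\vec{P}}$ weights from \cite{Ler1}, then converts this into the desired weighted control of $T_{k}$ with a constant growing at most polynomially in $[\vec{\omega}]_{A_{\vec{P}}}$. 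Summing the geometric series in $k$ closes the base estimate, and multilinear extrapolation completes the proof.

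\medskip

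\noindent\textbf{Main obstacle.} The hard part is extracting the geometric decay in $k$ in the weighted setting. In the unweighted case one gets it from Plancherel together with the factor $2^{-k(n-1/2)}$ in $\|m_{k}\|_{\infty}$, but in the weighted setting one must carefully balance this gain against the oscillatory behaviour of $K_{k}$ and the dependence on the $A_{\vec{P}}$ characteristic, without resorting to a lossy pointwise bound by $\mathcal{M}$ alone. A further technical point is the treatment of the endpoints $p_{j}=\infty$, which is precisely why the extrapolation framework of \cite{Bas} based on the classes $A_{\vec{P},(\vec{r},s)}$ is needed rather than the more classical formulation.
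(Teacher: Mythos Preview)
Your reduction to the base case $p_{1}=p_{2}=2$ via multilinear extrapolation matches the paper exactly, including the use of \cite{Bas} to reach the endpoints $p_{j}=\infty$. The divergence is in how the base estimate is obtained, and here your proposal has a genuine gap.

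You propose to decompose dyadically near $S^{2n-1}$, then for each piece $T_{k}$ to split spatially into annuli and dominate by $\mathcal{M}$. The difficulty you flag as the ``main obstacle'' is in fact fatal for this route as stated: the kernel $K_{k}$ has $\|K_{k}\|_{L^{1}(\R^{2n})}\sim 1$ (this is precisely what ``critical index'' means), so any bound obtained purely through pointwise comparison with $\mathcal{M}$ yields at best a uniform constant in $k$, never a summable $2^{-\varepsilon k}$. The unweighted decay $2^{-k(n-1/2)}$ comes from Plancherel and the smallness of $\|m_{k}\|_{\infty}$, and there is no mechanism in your outline that transfers any portion of this gain to the weighted side. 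Summing over the spatial annuli $0\leq j\lesssim k$ only makes matters worse.

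You cite Shi--Sun \cite{XQ} as the model, but you mischaracterise their method: it is not a real-variable spatial decomposition but a Stein-type \emph{complex interpolation} in the index $z$, and this is exactly what the paper does. The paper embeds $\mathcal{B}^{n-1/2}$ in the analytic family $z\mapsto \mathcal{B}^{z}$, proves an unweighted $L^{2}\times L^{2}\to L^{1}$ bound for $\mathrm{Re}(z)$ slightly below the critical index (Lemma~\ref{keylem1}, which also controls $\partial_{z}\mathcal{B}^{z}$ to guarantee analyticity), and a weighted bound for $\mathrm{Re}(z)>n-\tfrac{1}{2}$ via pointwise domination by $\mathcal{M}$ (Corollary~\ref{cor:weigh}). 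The three lines lemma, applied to a pairing in which the weights are raised to the power $z$, then produces the weighted estimate with weights $(\omega_{1}^{\theta},\omega_{2}^{\theta})$ for $0<\theta<1$. The final step is the openness of $A_{\vec{P}}$ (Lemma~\ref{prop:wei}): given $\vec{\omega}\in A_{\vec{P}}$ one finds $\delta>0$ with $(\omega_{1}^{1+\delta},\omega_{2}^{1+\delta})\in A_{\vec{P}}$, and the choice $\theta=(1+\delta)^{-1}$ together with a suitable choice of the endpoints lands the interpolated index exactly at $n-\tfrac{1}{2}$. This interpolation is the missing ingredient in your sketch; without it (or a substitute such as sparse domination) the dyadic pieces cannot be summed in the weighted setting.
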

In~\cite{KJS} K. Li et al. established the multi-linear analogue of the classical Rubio de Francia's extrapolation theorem. We state here the bilinear version of their extrapolation result as follows. 
\begin{theorem}(Corollary 1.5,\cite{KJS})\label{extra} 
Let $\mathcal{F}$ be a collection of  triplets $(f,f_1,f_2)$ of non-negative functions. Let $\vec{P}=(p_{1},p_2)$ be exponents with $1\leq p_{1},p_2<\infty$, such that given any $\vec{\omega}\in A_{\vec{P}},$ the inequality 
\begin{eqnarray*}\Vert f\Vert_{L^{p}(v_{\omega})}\lesssim \prod^{2}_{i=1}\Vert f_{i}\Vert_{L^{p_{i}}(\omega_{i})}
\end{eqnarray*}
holds for all $(f,f_{1},f_{2})\in\mathcal{F}$, where $\frac{1}{p}=\frac{1}{p_{1}}+\frac{1}{p_{2}}$. Then for all exponents $\vec{Q}=(q_{1},q_{2})$ with $1< q_1,q_2<\infty$, and for all weights $\vec{v}=(v_{1},v_{2})\in A_{\vec{Q}}$ the inequality  
\begin{eqnarray*} \Vert f\Vert_{L^{q}(v)}\lesssim \prod^{2}_{i=1}\Vert f_{i}\Vert_{L^{q_{i}}(v_{i})}
\end{eqnarray*}
 holds for all $(f,f_{1},f_{2})\in\mathcal{F}$, where $\frac{1}{q}=\frac{1}{q_{1}}+\frac{1}{q_{2}}$ and $v=\prod^{2}_{i=1}v^{\frac{q}{q_{i}}}_{i}$.
\end{theorem}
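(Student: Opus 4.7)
The plan is to adapt the Rubio de Francia extrapolation method to the bilinear weighted setting. Fix target exponents $\vec{Q}=(q_1,q_2)$ with $1<q_1,q_2<\infty$ and weights $\vec{v}=(v_1,v_2)\in A_{\vec{Q}}$; set $v=v_1^{q/q_1}v_2^{q/q_2}$. The strategy is to produce, for each admissible test configuration, auxiliary weights $\vec{\omega}=(\omega_1,\omega_2)\in A_{\vec{P}}$ so that the hypothesized inequality applied with $\vec{\omega}$ yields the target estimate for $\vec{v}$.

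First, dualize the target estimate. For $(f,f_1,f_2)\in\mathcal{F}$ and $q>p$, write
$$\|f\|_{L^q(v)}^p=\sup\int f^p\,h\,dx,$$
where the supremum is taken over non-negative $h$ of unit norm in a suitable weighted space dual to $L^{q/p}(v)$. Next, apply the Rubio de Francia algorithm to this $h$: for the appropriate conjugate exponent $s$ and measure, the iteration
$$\mathcal{R}h(x):=\sum_{k=0}^{\infty}\frac{M^k h(x)}{(2\|M\|_{L^s(\mu)})^k}$$
produces an $A_1$-majorant with $h\le\mathcal{R}h$, with $\|\mathcal{R}h\|_{L^s(\mu)}\le 2\|h\|_{L^s(\mu)}$, and with $[\mathcal{R}h]_{A_1}\lesssim\|M\|_{L^s(\mu)}$. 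The operator norm $\|M\|_{L^s(\mu)}$ is finite because the underlying measure inherits membership in a finite linear Muckenhoupt class from $\vec{v}\in A_{\vec{Q}}$.

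Using the Jones-type factorization for bilinear weights established in Lerner et al., any weight in $A_{\vec{Q}}$ decomposes into $A_1$ components. Combining such a decomposition of $\vec{v}$ with the $A_1$-majorant $\mathcal{R}h$, define $\omega_1,\omega_2$ as appropriate products of these $A_1$ building blocks raised to powers dictated by the relationship between $\vec{P}$ and $\vec{Q}$. The factorization guarantees $\vec{\omega}\in A_{\vec{P}}$ with $[\vec{\omega}]_{A_{\vec{P}}}$ controlled in terms of $[\vec{v}]_{A_{\vec{Q}}}$ and $\|h\|_{L^s(\mu)}$. Apply the hypothesis at these weights to obtain $\|f\|_{L^p(v_\omega)}\lesssim\prod_{i=1}^{2}\|f_i\|_{L^{p_i}(\omega_i)}$. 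Finally, use Hölder's inequality together with the $L^s$-control on $\mathcal{R}h$ and the factorization identities to convert each $\|f_i\|_{L^{p_i}(\omega_i)}$ back into $\|f_i\|_{L^{q_i}(v_i)}$, while the dualization converts the left-hand side back into $\|f\|_{L^q(v)}$.

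The main technical obstacle is the simultaneous calibration of exponents: one must choose the Rubio de Francia exponent $s$ and the powers in the factorization so that (i) the iteration series converges, i.e., the associated $\|M\|_{L^s(\mu)}$ is finite; (ii) the resulting pair $(\omega_1,\omega_2)$ lands exactly in $A_{\vec{P}}$ rather than in some larger class; and (iii) the final Hölder step recombines all pieces precisely into the product $\prod_i\|f_i\|_{L^{q_i}(v_i)}$ with the correct exponents. When $q<p$, one swaps the roles of the $f_i$ and $f$ in the duality step and runs a parallel argument. The bilinear $A_{\vec{P}}$ structure, and in particular the factorization properties it satisfies, is what makes this balancing feasible; without it, the constructed weights would miss the target class and break the chain of inequalities.
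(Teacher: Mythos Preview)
The paper does not prove this statement. It is quoted as Corollary~1.5 of \cite{KJS} (Li--Martell--Ombrosi) and invoked as a black box to reduce Theorem~\ref{mainthm} to the single-exponent case Theorem~\ref{mainthm1}; no argument for it appears anywhere in the text. There is therefore nothing in the paper to compare your proposal against.

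As a side remark on the proposal itself: the outline you give---dualize, run a Rubio de Francia iteration to build an $A_1$ majorant, combine with a factorization of the target weights, apply the hypothesis, and close with H\"older---is indeed the skeleton of the argument in \cite{KJS}. But the sketch is too schematic at the one place where the multilinear theory has real content. Saying that ``any weight in $A_{\vec Q}$ decomposes into $A_1$ components'' and that one then ``defines $\omega_1,\omega_2$ as appropriate products \ldots\ raised to powers dictated by the relationship between $\vec P$ and $\vec Q$'' hides the entire difficulty: producing a pair $(\omega_1,\omega_2)$ that lands in $A_{\vec P}$ \emph{and} whose product weight $v_\omega$ dominates the dualizing function \emph{and} for which each $\|f_i\|_{L^{p_i}(\omega_i)}$ is controlled by $\|f_i\|_{L^{q_i}(v_i)}$ requires a specific, delicate choice of exponents and of which Rubio de Francia operator to iterate (the algorithm in \cite{KJS} uses the structural characterization $\vec\omega\in A_{\vec P}\Leftrightarrow \omega_j^{1-p_j'}\in A_{2p_j'}$ and $v_\omega\in A_{2p}$, not a direct Jones factorization of the pair). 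As written, your step~(ii)---``the resulting pair $(\omega_1,\omega_2)$ lands exactly in $A_{\vec P}$''---is asserted rather than shown, and that is precisely the theorem.
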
  
The notation $A\lesssim B$ means that there is a constant $C$ such that $A\leq CB.$

 Recently, in \cite{Bas} B. Nieraeth extended the extrapolation result for multilinear weights using a different approach. This allows us to deduce boundedness at the end-points when $p_{j}=\infty$ for some $j$, see Theorem~4.1 in~\cite{Bas}.  Also, \cite{KJHS}  K. Li et. al. revisited the extrapolation theory to include the end-points $p_{j}=\infty$ for some $j$. Here we follow the notation as in ~\cite{Bas}.

In view of the discussion above on extrapolation results, it is enough to prove the main Theorem~\ref{mainthm} for a single triplet and all weights in the corresponding class of bilinear weights. Therefore, we shall prove the following theorem and refer to it as our main result henceforth. 
\begin{theorem} \label{mainthm1} The bilinear Bochner-Riesz operator $\mathcal {B}^{n-\frac{1}{2}}$ is bounded from  $L^{2}(\omega_{1})\times L^{2}(\omega_{2})\rightarrow L^{1}(v_{\omega})$ for all $\vec{\omega}\in A_{\vec{P}},$ where $\vec{P}=(2,2).$
\end{theorem}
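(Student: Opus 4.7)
By the bilinear extrapolation theorem (Theorem~\ref{extra}), it suffices to prove the weighted $L^2(\omega_1)\times L^2(\omega_2)\to L^1(v_\omega)$ bound for $\vec{\omega}\in A_{\vec{P}}$ with $\vec{P}=(2,2)$ as stated in Theorem~\ref{mainthm1}. I would adapt the argument of Shi--Sun~\cite{XQ} for the linear critical-index case to the present bilinear setting. The convolution kernel $K_{n-1/2}$ lives on $\mathbb{R}^{2n}$, and the Bessel asymptotic
$$J_{2n-1/2}(2\pi r) = a_+(r)\,e^{2\pi i r}+a_-(r)\,e^{-2\pi i r}, \qquad |a_\pm^{(k)}(r)|\lesssim r^{-1/2-k},$$
exhibits it as a rough oscillatory kernel of singular-integral type on $\mathbb{R}^{2n}$. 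Decompose
$$K_{n-1/2}(y,z) = K^*(y,z) + \sum_{j\geq 1} K_j(y,z), \qquad K_j(y,z) = K_{n-1/2}(y,z)\,\varphi\bigl(2^{-j}|(y,z)|\bigr),$$
with $\varphi\in C_c^\infty([1/2,2])$ and $K^*$ the bounded part near the origin. The operator with kernel $K^*$ is pointwise dominated by $\mathcal{M}(f,g)$ and is therefore weighted bounded by Theorem~\ref{bmf}. Let $T_j$ denote the bilinear convolution operator with kernel $K_j$.

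For each dyadic piece $T_j$, the Bessel asymptotic provides two complementary estimates. On the spatial side, $|K_j|\lesssim 2^{-2nj}\chi_{\{|(y,z)|\sim 2^j\}}$ yields the crude pointwise bound $|T_j(f,g)|\lesssim \mathcal{M}(f,g)$, which gives a weighted bilinear bound uniform in $j$. On the frequency side, after extracting the oscillation, the multiplier $m_j=\widehat{K_j}$ is a smooth bump concentrated in an $O(2^{-j})$-neighborhood of the unit sphere $\{|(\xi,\eta)|=1\}\subset\mathbb{R}^{2n}$, with symbol estimates $|\partial^\alpha m_j|\lesssim 2^{(|\alpha|-1)j}$. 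This frequency localization, combined with the Plancherel-based argument underlying the unweighted $L^2\times L^2\to L^1$ bound of Bernicot et al.~\cite{Bern1}, gives the unweighted decay
$$\|T_j(f,g)\|_{L^1(\mathbb{R}^n)}\lesssim 2^{-\varepsilon j}\,\|f\|_{L^2(\mathbb{R}^n)}\,\|g\|_{L^2(\mathbb{R}^n)}$$
for some $\varepsilon>0$.

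The main obstacle is promoting the unweighted $j$-decay into a summable weighted estimate for $T_j$ with $\vec{\omega}\in A_{(2,2)}$. I plan to address this by interpolation with change of measure. Using the openness property of $A_{\vec{P}}$, there exists $\tau>0$ with $\vec{\omega}\in A_{(2-\tau,2-\tau)}$, and the pointwise domination by $\mathcal{M}(f,g)$ combined with Theorem~\ref{bmf} then gives a weighted bound for $T_j$ at exponents $(2-\tau,2-\tau)$ with at most polynomial growth in $j$. A bilinear version of the Stein--Weiss interpolation between this weighted bound and the unweighted decay at $(2,2)$ will then yield a bound of the form $\|T_j\|_{L^2(\omega_1)\times L^2(\omega_2)\to L^1(v_\omega)}\lesssim 2^{-\varepsilon' j}$ for some $\varepsilon'>0$. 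Summing over $j$ and combining with the $K^*$ contribution completes the proof of Theorem~\ref{mainthm1}; the delicate bookkeeping of weight exponents in the bilinear interpolation step, ensuring the target weight $v_\omega$ transforms correctly under the interpolation, is what I expect to require the most careful argument.
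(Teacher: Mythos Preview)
Your overall strategy---decompose, get uniform weighted control from $\mathcal M$--domination, get unweighted decay from the thin spherical support of the symbol, then interpolate with change of measure---is reasonable, but it differs from the paper's route and, as written, contains a real gap in the interpolation step. The paper does \emph{not} decompose the kernel dyadically. Instead it runs the three--lines lemma on the analytic family $z\mapsto \mathcal B^{(1+\epsilon_1)(1-z)+z(n-\frac12+\epsilon_2)}$ (with suitable damping factors), using the unweighted $L^2\times L^2\to L^1$ bound of Lemma~\ref{keylem1} on $Re(z)=0$ and the weighted bound of Corollary~\ref{cor:weigh} (via $\mathcal M$--domination) on $Re(z)=1$. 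This produces the estimate with weight $v_\omega^\theta$ at an intermediate Bochner--Riesz index, and then Lemma~\ref{prop:wei} (the bilinear reverse H\"older) is applied to the pair $(\omega_1^{1+\delta},\omega_2^{1+\delta})\in A_{\vec P}$ with $\theta=1/(1+\delta)$, while $\epsilon_1,\epsilon_2$ are tuned so that the resulting index is exactly $n-\tfrac12$. In other words, the paper interpolates in the exponent $\alpha$ rather than in a dyadic scale $j$.

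The gap in your plan is the Stein--Weiss step. Interpolating between a \emph{weighted} bound at exponents $(2-\tau,2-\tau)$ and an \emph{unweighted} bound at $(2,2)$ cannot produce a nontrivially weighted bound at $(2,2)$: solving $\tfrac12=\tfrac{1-\theta}{2-\tau}+\tfrac{\theta}{2}$ forces $\theta=1$, which is the unweighted endpoint. The correct maneuver is to keep the exponents fixed at $(2,2)$ and interpolate between the weights $(\omega_1^{1+\delta},\omega_2^{1+\delta})$ (which lies in $A_{(2,2)}$ by Lemma~\ref{prop:wei}, so the $\mathcal M$--bound applies) and the trivial weight $1$; choosing $\theta=1/(1+\delta)$ then lands on $(\omega_1,\omega_2)$ with a surviving factor $2^{-\varepsilon\theta j}$. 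This is exactly the mechanism the paper uses, only you would be applying it piece by piece rather than to the analytic family.

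Two further cautions. First, your description of $m_j=\widehat{K_j}$ as ``a smooth bump concentrated in an $O(2^{-j})$--neighbourhood of the unit sphere'' is not literally correct: $\widehat{K_j}$ is the convolution of $(1-|\cdot|^2)^{n-1/2}_+$ with a mean--zero approximate identity at scale $2^{-j}$, so it decays rapidly away from the sphere but is not supported there; the symbol bound you quote should read $\|m_j\|_\infty\lesssim 2^{-j(n-1/2)}$ rather than $2^{-j}$. Second, the unweighted decay $\|T_j\|_{L^2\times L^2\to L^1}\lesssim 2^{-\varepsilon j}$ is not immediate from \cite{Bern1}; you will need the quantitative input of Lemma~3.7 of \cite{Bern1} for $n\geq2$ and Proposition~6.1 of \cite{FP} for $n=1$ (exactly as in the paper's Lemma~\ref{keylem1}), and the case $n=1$ in particular requires the normal/tangential derivative analysis rather than a crude symbol estimate. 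Once these points are addressed your argument goes through, but at that stage it has become the paper's proof transported to each dyadic shell, so the analytic--family approach is both shorter and avoids the summation in $j$ altogether.
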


\subsection{Organization of the paper} We develop the auxiliary results required to prove the main theorem in section~\ref{main2:sec}. Section~\ref{pfmain} is devoted to proving the main theorem. 
\section{Auxiliary results} \label{main2:sec}

The following lemma is a partial substitute of the reverse H\"{o}lder inequality in the bilinear setting. 
\begin{lemma}\label{prop:wei}
Let $\vec{\omega}=(\omega_{1},\omega_{2})\in A_{\vec{P}}$, where $\frac{1}{p}=\frac{1}{p_{1}}+\frac{1}{p_{2}}$ with $1<p_1,p_2 <\infty$, then there exists a $\delta>0$, such that $\vec{\omega}_{\delta}=(\omega^{1+\delta}_{1},\omega^{1+\delta}_{2})\in A_{\vec{P}}$. 

\end{lemma}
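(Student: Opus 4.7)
The plan is to reduce the lemma to the classical linear reverse H\"older property for $A_q$ weights. The key ingredient I would invoke is the equivalent characterization of the bilinear class (Theorem 3.6 in \cite{Ler1}): $\vec{\omega}\in A_{\vec{P}}$ if and only if $v_\omega\in A_{2p}$ and $\omega_j^{1-p_j'}\in A_{2p_j'}$ for $j=1,2$. Each of these three objects is then a classical Muckenhoupt weight, and so each satisfies a reverse H\"older inequality: there exist $\delta_0,\delta_1,\delta_2>0$ and a constant $C$ such that
\begin{equation*}
\Bigl(\fint_B v_\omega^{1+\delta_0}\Bigr)^{\frac{1}{1+\delta_0}}\leq C\fint_B v_\omega,\qquad \Bigl(\fint_B \omega_j^{(1-p_j')(1+\delta_j)}\Bigr)^{\frac{1}{1+\delta_j}}\leq C\fint_B \omega_j^{1-p_j'}.
\end{equation*}

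Next I would set $\delta=\min(\delta_0,\delta_1,\delta_2)$. By Jensen's inequality applied to the probability measure $|B|^{-1}\mathbf 1_B\,dx$, the $L^{1+\delta}$ average is dominated by the $L^{1+\delta_j}$ average, so all three reverse H\"older inequalities persist with the common exponent $1+\delta$. This is the place where I would have to be slightly careful, but the Jensen step is genuinely routine.

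The proof then concludes by a direct computation of $[\vec\omega_\delta]_{A_{\vec P}}$. Observe that $v_{\omega_\delta}=\prod_j \omega_j^{(1+\delta)p/p_j}=v_\omega^{1+\delta}$ and $(\omega_j^{1+\delta})^{1-p_j'}=\omega_j^{(1-p_j')(1+\delta)}$, so applying the three reverse H\"older inequalities factor-by-factor gives
\begin{equation*}
\Bigl(\fint_B v_{\omega_\delta}\Bigr)^{\frac{1}{p}}\prod_{j=1}^{2}\Bigl(\fint_B (\omega_j^{1+\delta})^{1-p_j'}\Bigr)^{\frac{1}{p_j'}}\leq C'\,\Bigl[\Bigl(\fint_B v_\omega\Bigr)^{\frac{1}{p}}\prod_{j=1}^{2}\Bigl(\fint_B \omega_j^{1-p_j'}\Bigr)^{\frac{1}{p_j'}}\Bigr]^{1+\delta}\leq C'\,[\vec\omega]_{A_{\vec P}}^{1+\delta}<\infty,
\end{equation*}
uniformly in $B$, which is the desired conclusion.

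The main obstacle, as I see it, is conceptual rather than technical: one must identify the correct linear substitute for the bilinear weight condition. Once the characterization via $v_\omega\in A_{2p}$ and $\omega_j^{1-p_j'}\in A_{2p_j'}$ is in hand, everything reduces to three independent applications of the classical reverse H\"older inequality together with the Jensen step to unify the exponents. An alternative (more self-contained) route would be to prove a reverse H\"older inequality directly from the $A_{\vec P}$ definition by adapting the Calder\'on-Zygmund decomposition argument, but this would essentially recover the linear proof applied to $v_\omega$ and $\omega_j^{1-p_j'}$, so invoking the known characterization is the shortest path.
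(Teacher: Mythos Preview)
Your proposal is correct and follows essentially the same route as the paper: invoke the Lerner et al.\ characterization $v_\omega\in A_{2p}$, $\omega_j^{1-p_j'}\in A_{2p_j'}$, apply the classical reverse H\"older inequality to each of the three weights, take the minimum exponent (your Jensen step is exactly what the paper uses implicitly), and then compute $[\vec\omega_\delta]_{A_{\vec P}}\leq C[\vec\omega]_{A_{\vec P}}^{1+\delta}$ directly from the definition.
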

Note that the above lemma is similar to Lemma 6.1 in \cite{Ler1} and the proof follows with no difficulty. However, for the sake of completeness, we provide the proof here. 
\begin{proof}
Using the characterization of bilinear weights in terms of the classical Muckenhoupt weights from~ \cite{Ler1}, we know that $\vec{\omega}\in A_{\vec{P}}$ if and only if $\omega^{1-p'_{j}}_{j}\in A_{2p'_{j}}$ for $j=1,2$ and $v_{\omega}=\prod^{2}_{j=1}\omega^{\frac{p}{p_{j}}}_{j}\in A_{2p}$.\\ 
The reverse H\"{o}lder inequality for $A_p$ weights yields that there exist $t_{j}>1$ and $C_{j}>0$ for all $j=1,2,3$, such that $$\left(\fint_{B}\omega^{t_{j}(1-p'_{j})}_{j}\right)^{\frac{1}{t_{j}}}\leq C_{j}\fint_{B}\omega^{1-p'_{j}}_{j}~~~\ \ \ \text{and}~~~~\ \ \ \ \  \left(\fint_{B}v^{t_{3}}_{\omega}\right)^{\frac{1}{t_{3}}}\leq C_{3}\fint_{B}v_{\omega}.$$

Set $t=\min\lbrace t_{1},t_2,t_{3}\rbrace$ and $C=\max\lbrace C_{1},C_2,C_{3}\rbrace$ and note that  

\begin{eqnarray*} \left(\fint_{B}\omega^{t(1-p'_{j})}_{j}\right)^{\frac{1}{t}}\leq C\fint_{B}\omega^{1-p'_{j}}_{j}
~~~\ \ \ \text{and}~~~~\ \ \ \ \ \left(\fint_{B}v^{t}_{\omega}\right)^{\frac{1}{t}}\leq C\fint_{B}v_{\omega}.
\end{eqnarray*}
Therefore, we have, 
\begin{eqnarray*}[\vec{\omega}_{t}]_{A_{\vec{P}}}
&=&\sup_{B\subset \mathbb{R}^{n}}\left(\fint_{B}v^{t}_{\omega}\right)^{\frac{1}{p}}\prod^{2}_{j=1}\left(\fint_{B}\omega^{t(1-p'_{j})}_{j}\right)^{\frac{1}{p'_{j}}}\\
&\leq & C \sup_{B\subset \mathbb{R}^{n}}\left(\fint_{B}v_{\omega}\right)^{\frac{t}{p}}\prod^{2}_{j=1}\left(\fint_{B}\omega^{(1-p'_{j})}_{j}\right)^{\frac{t}{p'_{j}}}\\
&=& C [\vec{\omega}]^{t}_{A_{\vec{P}}}.
\end{eqnarray*}
Finally, choose $\delta>0$ such that $t=1+\delta$ to complete the proof. 
\end{proof}

%


Note that one can consider the exponent $\alpha$ in the definition of $\mathcal B^{\alpha}$ to be a complex number. 

Next, we show that the operator $\mathcal {B}^{z}$ and its derivative with respect to the parameter $z$ satisfy the required $L^p$ estimates for certain index $z$ with $Re(z)< n-\frac{1}{2}$. These estimates play a key role in order to apply the analytic interpolation theorem proved in~\cite{LM} by L. Grafakos and M. Mastylo. We use the notation $\partial_x=\frac{\partial}{\partial x}$ and $\partial^{\alpha}_x=\frac{\partial^{\alpha}}{\partial x^{\alpha}}.$
\begin{lemma}\label{keylem1}
Let $n\geq 1$ and $z=Re(z)+\iota Im(z)$ be a complex number such that $\alpha_n< Re(z)< n-\frac{1}{2}$, where $\alpha_1=0$ and $\alpha_n=1$ for $n\geq 2$.  Then we have the following
\begin{eqnarray*} \int_{\mathbb{R}^n} |\left(\partial_z\right)^k \mathcal {B}^{z}(f,g)(x)| dx &\leq & C_{n+Re(z)} e^{\mathfrak{C} \vert Im(z)\vert^{2}}\Vert f\Vert_{L^{2}}\Vert g\Vert_{L^{2}}, \ \ \ \ ~\text{for}~\  k=0,1,
\end{eqnarray*}
where the constant $\mathfrak{C}>0$ comes from the asymptotic expansion of the Bessel function.
\end{lemma}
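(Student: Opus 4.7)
My plan is to estimate $\mathcal{B}^z(f,g)$ directly via its kernel representation, combining a trivial $L^1$-convolution bound for the near-origin part with a dyadic analysis of the oscillatory far part, and to reduce the $k=1$ case to the $k=0$ case by a pointwise symbol bound.

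For the $k=0$ case I would split $K_z = K_z^0 + K_z^\infty$ with $K_z^0$ supported in $\{|(y,\zeta)|\leq 1\}$. Near the origin, the power series expansion of $J_{z+n}$, together with Stirling's formula applied to the resulting $1/\Gamma$ coefficients and to $c_{n+z}$, gives $\|K_z^0\|_{L^1(\mathbb R^{2n})} \leq C_{n+Re(z)}\, e^{\mathfrak{C}|Im(z)|^2}$. The elementary bilinear convolution inequality
\[
\Bigl\|\iint K(y,\zeta)\,f(\cdot-y)\,g(\cdot-\zeta)\,dy\,d\zeta\Bigr\|_{L^1(\mathbb R^n)} \leq \|K\|_{L^1(\mathbb R^{2n})}\|f\|_{L^2}\|g\|_{L^2},
\]
obtained by Fubini and Cauchy--Schwarz in $x$, then handles the near piece. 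For the far piece, the asymptotic $J_\nu(r) = \sqrt{2/(\pi r)}\cos(r-\pi\nu/2-\pi/4)+O(r^{-3/2})$ reduces matters (up to a remainder with faster decay and hence handled in the same way) to the oscillatory kernel
\[
K_z^\infty(y,\zeta) \;\approx\; d_z\,|(y,\zeta)|^{-z-n-\frac12}\,e^{\pm 2\pi i|(y,\zeta)|},\qquad |d_z|\leq e^{\mathfrak{C}|Im(z)|^2}.
\]
This is precisely the type of oscillatory bilinear kernel analyzed in \cite{Bern1} for the unweighted $L^2\times L^2\to L^1$ bound. A Littlewood--Paley decomposition of $K_z^\infty$ at the scales $|(y,\zeta)|\sim 2^j$, combined with almost-orthogonality on the individual dyadic pieces, yields geometric decay in $j$ as soon as $Re(z)>\alpha_n$; summing the resulting geometric series delivers the $L^2\times L^2\to L^1$ bound with the claimed constant.

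For the $k=1$ case, differentiation under the integral produces the multiplier $(1-|\xi|^2-|\eta|^2)_+^z\log(1-|\xi|^2-|\eta|^2)$; equivalently, $\partial_z K_z$ is obtained by differentiating the Bessel representation in the order $\nu=z+n$ and in $c_{n+z}$, and retains the same structural form as $K_z$ up to harmless logarithmic corrections. Since $|\log(1-|\xi|^2-|\eta|^2)|\leq C_\delta(1-|\xi|^2-|\eta|^2)^{-\delta}$ on the support of the symbol for every $\delta>0$, the effective exponent is $Re(z)-\delta$, and choosing $\delta>0$ small enough that $Re(z)-\delta>\alpha_n$ allows the $k=0$ analysis to apply verbatim.

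The main obstacle is the careful bookkeeping of the $Im(z)$-dependence of every constant in sight. Both the normalizing factor $c_{n+z}$ (a ratio of Gamma functions) and the leading term of the Bessel asymptotic contribute exponential factors of the form $e^{c|Im(z)|}$ via Stirling and via $\cos(r-\pi(z+n)/2-\pi/4)$ evaluated at complex order $z+n$. The cumulative effect is comfortably bounded by $e^{\mathfrak{C}|Im(z)|^2}$ uniformly on the strip $\alpha_n < Re(z) < n-\tfrac12$, which is precisely the subexponential growth rate tolerated by the Grafakos--Mastylo analytic interpolation theorem of \cite{LM} used in the sequel.
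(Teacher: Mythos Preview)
Your reduction of the $k=1$ case to the $k=0$ case has a genuine gap. The pointwise inequality $|\log(1-|\xi|^2-|\eta|^2)_+|\leq C_\delta(1-|\xi|^2-|\eta|^2)_+^{-\delta}$ does \emph{not} imply any comparison of the associated bilinear multiplier norms: the $L^2\times L^2\to L^1$ multiplier norm is not monotone in the pointwise size of the symbol, so ``effective exponent $Re(z)-\delta$'' is not a valid conclusion from that inequality alone. What is actually needed is that the structural estimates driving the $k=0$ argument---derivative bounds on the dyadic symbol pieces, not merely size bounds---survive multiplication by the logarithm, and this you have not verified. (A correct and short reduction \emph{is} available via Cauchy's integral formula, $\partial_z\mathcal{B}^z=\frac{1}{2\pi i}\oint_{|w-z|=\rho}(w-z)^{-2}\mathcal{B}^w\,dw$, which for small $\rho$ bounds the $k=1$ case by a supremum of $k=0$ bounds along the circle; but that is not the argument you gave.)

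The paper proceeds quite differently and stays on the multiplier side throughout. For $n\geq2$ it invokes the bi-radial criterion from \cite{Bern1} (Lemma~3.7 there): if $m(\xi,\eta)=m_0(|\xi|,|\eta|)$ with $\partial_{\lambda_1}\partial_{\lambda_2}m_0\in L^1(\R^2)$, then $T_m:L^2\times L^2\to L^1$; one computes the mixed partial of $(1-\lambda_1^2-\lambda_2^2)^z_+\log(1-\lambda_1^2-\lambda_2^2)_+$ explicitly and checks its integrability precisely when $Re(z)>1$. For $n=1$ the paper decomposes the symbol into annuli of width $2^{-j}$ near the unit circle and verifies, by direct normal/tangential derivative computations, that each piece belongs to the Bernicot--Germain class $\mathcal{N}^\Gamma_\epsilon$ of \cite{FP}. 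Your kernel-side sketch for $k=0$, ``dyadic in $|(y,\zeta)|$ plus almost-orthogonality'', is likewise too vague to reach the threshold $\alpha_1=0$ in dimension one; that endpoint genuinely requires the curvature input encoded in $\mathcal{N}^\Gamma_\epsilon$, which is substantially more than a generic Littlewood--Paley argument.
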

\begin{remark} The case $k=0$ in the above lemma is already known. See~\cite{Bern1, Lee2} for details. We exploit the ideas given in~\cite{FP,Bern1} to prove the other case  $k=1$. 
\end{remark}
\noindent {\bf Proof of Lemma~\ref{keylem1}}~ We need to prove the lemma for $k=1$ only. We shall consider the cases $n=1$ and $n\geq 2$ separately. \\
Note that for $k=1$, the multiplier symbol of the bilinear operator under consideration is given by 
$$m(\xi,\eta)=m_{0}(|\xi|, |\eta|)=(1- |\xi|^{2}-|\eta|^{2})^{z}_{+}\log(1- |\xi|^{2}-|\eta|^{2})_{+},~(\xi,\eta)\in \R^n\times \R^n.$$

\noindent {\bf Case I: $n=1$.} We follow the ideas from~\cite{FP} (see Proposition 6.1) to deal with this case. 

We perform the standard spherical smooth decomposition of the symbol, i.e., we write 

$$m(\xi,\eta)=m_{\phi}(\xi,\eta)+\sum\limits_{j=0}^{\infty} (j+3) 2^{-z(j+3)} m_{2^{-j}}(\xi,\eta),$$
where  
$$m_{2^{-j}}(\xi,\eta)=(j+3)^{-1} 2^{z(j+3)}(1- |\xi|^{2}-|\eta|^{2})^{z}_{+}\log(1- |\xi|^{2}-|\eta|^{2})_{+}\psi\left(\frac{1-|(\xi,\eta)|}{2^{-j}}\right)$$ and  $m_{\phi}(\xi,\eta)=m_{0}(|\xi|,|\eta|)\phi(|(\xi,\eta)|)$ with  $z=Re(z)+\iota Im(z)$ , $Re(z)>0$. Denote $ \psi_{2^{-j}}(t)=\psi\left(\frac{1-t}{2^{-j}}\right)$. And the functions  $\phi,\psi$ are two radial smooth functions supported in $B(0,\frac{1}{2})$ and annulus $Ann(\frac{1}{8},\frac{5}{8})=\{(\xi,\eta):\frac{1}{8}\leq |(\xi,\eta)|<\frac{5}{8}\}$ respectively such that $$\phi(t)+\sum_{j\geq0}\psi(\frac{1-t}{2^{-j}})=1.$$
We also assume that $\sup_{x\in\R}|\frac{d^k}{dx^k}\psi(x)|\leq c$ for all $0\leq k\leq 4,$ where $c$ is a fixed constant.

 Since the symbol $m_{\phi}$ is smooth so the corresponding kernel is integrable and the associated bilinear multiplier operator is bounded from $L^{2}(\omega_{1})\times L^{2}(\omega_{2})\rightarrow L^{1}(v_{\omega})$. Observe that the  function  $\psi_{2^{-j}}$ is smooth and supported in an annular region of inner and outer radius $1-5\cdot2^{-j-3}$ and $1-2^{-j-3}$ respectively ( see~\cite{Grafakosmodern}~and~\cite{Bern1}, Theorem $4.1$ for more details). 
We follow the strategy of the proof of Theorem $4.1$ from~\cite{Bern1} and show that the symbols $m_{2^{-j}}$ satisfy the hypothesis of Proposition 6.1 in~\cite{FP}. For an easy reference, we use the same notation as in ~\cite{FP} and denote $m_{2^{-j}}$ by $m_{\epsilon}$ with $\epsilon=2^{-j}$. 

Let $\nu(x)$ denote the distance of a point $x\in \R^2$ from the unit circle $\Gamma$. Let $\nabla\nu$ denote the direction of the local normal coordinate and $(\nabla\nu)^{\perp}$ denote the direction of tangential coordinate. We need to show that the symbol $m_{\epsilon}$  belongs to the class $\mathcal{N}^{\Gamma}_{\epsilon}$, which amounts to show that 
\begin{eqnarray}\label{prop61}
\left| \partial^{\alpha}_{\nabla \nu} \partial^{\beta}_{(\nabla \nu)^{\perp}} m_{\epsilon}\right|
&\lesssim & \epsilon^{-\vert \alpha\vert}
\end{eqnarray}
for sufficiently many multi-index $\alpha$ and $\beta$. 

See Definition 1.3 in~\cite{FP} for the exact definition of  
$\mathcal{N}^{\Gamma}_{\epsilon}$ and more details about the class. 

Let $x=(\xi,\eta)$ be in the open unit ball $B(0,1)$. Then $\nu(x)=1-\sqrt{(\xi^{2}+\eta^{2})}$ and 
$$\nabla\nu(x)=\left(\frac{-\xi}{\sqrt{(\xi^{2}+\eta^{2})}},\frac{-\eta}{\sqrt{(\xi^{2}+\eta^{2})}}\right)~\text{and}~(\nabla\nu)^{\perp}(x)=\left(\frac{\eta}{\sqrt{(\xi^{2}+\eta^{2})}},\frac{-\xi}{\sqrt{(\xi^{2}+\eta^{2})}}\right).$$
Note that  $\nabla m_{\epsilon}=(\partial_{\xi} m_{\epsilon}, \partial_{\eta} m_{\epsilon})$, 
where 
\begin{eqnarray*} \partial_{\xi} m_{\epsilon}
	&=&(j+3)^{-1}2^{z(j+3)}\left(-2\xi(1-\xi^{2}-\eta^{2})^{z-1}_{+}\left(1+z\log(1-\xi^{2}-\eta^{2})_{+}\right)\psi_{\epsilon}\right)\\
	& &+(j+3)^{-1}2^{z(j+3)}(1-\xi^{2}-\eta^{2})^{z}_{+}\log(1-\xi^{2}-\eta^{2})_{+}\partial_{\xi} \psi_{\epsilon}
\end{eqnarray*} 
and 
\begin{eqnarray*} \partial_{\eta} m_{\epsilon}
	&=&(j+3)^{-1}2^{z(j+3)}\left(-2\eta(1-\xi^{2}-\eta^{2})^{z-1}_{+}\left(1+z\log(1-\xi^{2}-\eta^{2})_{+}\right)\psi_{\epsilon}\right)\\
	&&+(j+3)^{-1}2^{z(j+3)}(1-\xi^{2}-\eta^{2})^{z}_{+}\log(1-\xi^{2}-\eta^{2})_{+}\partial_{\eta} \psi_{\epsilon}.
\end{eqnarray*} 

Therefore, $$\partial_{(\nabla \nu)^{\perp}} m_{\epsilon}=\nabla m_{\epsilon}\cdot (\nabla\nu)^{\perp}=(j+3)^{-1}2^{z(j+3)}\frac{(1-\xi^{2}-\eta^{2})^{z}_{+}}{\sqrt{(\xi^{2}+\eta^{2})}}\log(1-\xi^{2}-\eta^{2})_{+}\left(\eta \partial_{\xi} \psi_{\epsilon}-\xi \partial_{\eta} \psi_{\epsilon}\right)=0,$$
because $\left(\eta \partial_{\xi} \psi_{\epsilon}-\xi \partial_{\eta} \psi_{\epsilon}\right)=0$. Therefore, we only need to consider $\beta=0$. Now,
\begin{eqnarray*}
\partial_{\nabla\nu} m_{\epsilon}&=&\nabla m_{\epsilon}\cdot\nabla\nu\\
	&=&(j+3)^{-1}2^{z(j+3)}\left(2\sqrt{\xi^{2}+\eta^{2}}(1+z\log(1-\xi^{2}-\eta^{2}))(1-\xi^{2}-\eta^{2})^{z-1}_{+}\psi_{\epsilon}\right)\\ &+&(j+3)^{-1}2^{z(j+3)}\frac{1}{\epsilon}(1-\xi^{2}-\eta^{2})^{z}_{+}\log(1-\xi^{2}-\eta^{2})\psi'_{\epsilon}.
\end{eqnarray*} 
Therefore, 
\begin{eqnarray}
|\nabla m_{\epsilon}\cdot\nabla\nu|\lesssim \epsilon^{-1}.
\end{eqnarray} 
In the above we have used the fact that for  $Re(z)>0$, $(1-\xi^{2}-\eta^{2})^{z}_{+}\log(1-\xi^{2}-\eta^{2})_{+}$ is a bounded function and it is bounded by $(j+3)2^{-Re(z)(j+3)}$ \\
Now set  $\tilde{m_{\epsilon}}=\nabla m_{\epsilon}\cdot\nabla\nu$. Observe that the second term of $\nabla m_{\epsilon}\cdot\nabla\nu$ is similar to $\epsilon^{-1}m_{\epsilon}$. Therefore, it suffices to consider $\tilde{m}_{\epsilon}$ as\\  $(j+3)^{-1}2^{z(j+3)}\left(2\sqrt{\xi^{2}+\eta^{2}}(1+z\log(1-\xi^{2}-\eta^{2}))(1-\xi^{2}-\eta^{2})^{z-1}_{+}\psi_{\epsilon}\right)$.\\  Now,
\begin{eqnarray*}
	\partial^{2}_{\nabla\nu} {m}_{\epsilon}&=&\nabla\tilde{m}_{\epsilon}\cdot \nabla\nu,
\end{eqnarray*}
where 
\begin{eqnarray*}
	\nabla\tilde{m}_{\epsilon}&=&(\partial_{\xi} \tilde{m}_{\epsilon},
	\partial_{\eta} \tilde{m}_{\epsilon})~~~~~\text{and}\\
	\partial_{\xi} \tilde{m}_{\epsilon}&=&(j+3)^{-1}2^{z(j+3)}\frac{2\xi}{\sqrt{\xi^{2}+\eta^{2}}}(1-\xi^{2}-\eta^{2})_{+}^{z-1}(1+z\log(1-\xi^{2}-\eta^{2}))\psi_{\epsilon}\\ &+&(j+3)^{-1}2^{z(j+3)}2(z-1)(-2\xi)\sqrt{\xi^{2}+\eta^{2}}(1-\xi^{2}-\eta^{2})^{z-2}(1+z\log(1-\xi^{2}-\eta^{2}))\psi_{\epsilon}\\
	&+&(j+3)^{-1}2^{z(j+3)}2z(-2\xi)\sqrt{\xi^{2}+\eta^{2}} (1-\xi^{2}-\eta^{2})^{z-2}\psi_{\epsilon}\\
	&+&(j+3)^{-1}2^{z(j+3)}\frac{(-2\xi)}{\epsilon}(1-\xi^{2}-\eta^{2})^{z-1}(1+z\log(1-\xi^{2}-\eta^{2}))\psi'_{\epsilon}.
\end{eqnarray*}
Similarly we get,
\begin{eqnarray*}
	\partial_{\eta} \tilde{m}_{\epsilon}&=&(j+3)^{-1}2^{z(j+3)}\frac{2\eta}{\sqrt{\xi^{2}+\eta^{2}}}(1-\xi^{2}-\eta^{2})_{+}^{z-1}(1+z\log(1-\xi^{2}-\eta^{2}))\psi_{\epsilon}\\ &+&(j+3)^{-1}2^{z(j+3)}2(z-1)(-2\eta)\sqrt{\xi^{2}+\eta^{2}}(1-\xi^{2}-\eta^{2})^{z-2}(1+z\log(1-\xi^{2}-\eta^{2}))\psi_{\epsilon}\\
	&+&(j+3)^{-1}2^{z(j+3)}2z(-2\eta)\sqrt{\xi^{2}+\eta^{2}} (1-\xi^{2}-\eta^{2})^{z-2}\psi_{\epsilon}\\
	&+&(j+3)^{-1}2^{z(j+3)}\frac{(-2\eta)}{\epsilon}(1-\xi^{2}-\eta^{2})^{z-1}(1+z\log(1-\xi^{2}-\eta^{2}))\psi'_{\epsilon}.
\end{eqnarray*}
Now using the previous arguments we get 
\begin{eqnarray*}
	|\partial^{2}_{\nabla\nu}m_{\epsilon}|\leq C\epsilon^{-2}.
\end{eqnarray*} 

Similar arguments work for general multi-index $\alpha$ and $\beta$ and we get 

\begin{eqnarray*} \left| \partial^{\alpha}_{\nabla \nu}\partial^{\beta}_{(\nabla \nu)^{\perp}} m_{\epsilon}\right|\leq C \epsilon^{-\vert \alpha\vert},
\end{eqnarray*}  
where $C$ is a positive constant.

\noindent {\bf Case II: $n\geq 2$.} In this case $\alpha_n=1$. The desired result follows by invoking the following lemma from~\cite{Bern1}. 
\begin{lemma}(Lemma 3.7, \cite{Bern1})
Let $n\geq 2$ and $m_{0}$ be a bounded function supported in $[-1,1]^{2}$ such that 
\begin{eqnarray*}\partial_{\lambda_{1}}\partial_{\lambda_{2}}m_{0}(\lambda_{1}, \lambda_{2})\in L^{1}(\mathbb{R}^{2}).
\end{eqnarray*}
Define $m(\xi, \eta)=m_{0}(\vert \xi\vert, \vert \eta\vert)$ for $(\xi,\eta)\in \R^n \times \R^n$. Then the bilinear multiplier operator $T_{m}$ associated with $m$ is bounded from $L^{2}(\mathbb{R}^{n})\times L^{2}(\mathbb{R}^{n})$ into $L^{1}(\mathbb{R}^{n})$. Moreover, 
$$\Vert T_{m}\Vert_{L^{2}(\R^n)\times L^{2}(\R^n)\rightarrow L^{1}(\R^n)}\lesssim \|\partial_{\lambda_{1}}\partial_{\lambda_{2}}m_{0}\|_{L^1(\R^2)}.$$
 
\end{lemma}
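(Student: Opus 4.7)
The strategy is to represent $m_0$ as a superposition of tensor products of characteristic functions via its mixed partial derivative, and then exploit the trivial $L^2\to L^2$ boundedness (with norm $\le 1$) of the disc multiplier.

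Since $m_0$ is supported in $[-1,1]^2$ and $\partial_{\lambda_1}\partial_{\lambda_2}m_0\in L^1(\R^2)$, the fundamental theorem of calculus (applied in each variable, using the vanishing of $m_0$ at the edge of its support) writes
\begin{equation*}
m_0(\lambda_1,\lambda_2)=\int_{0}^{\infty}\int_{0}^{\infty}\mathbf 1_{\{s>\lambda_1\}}\,\mathbf 1_{\{t>\lambda_2\}}\,\partial_s\partial_t m_0(s,t)\,ds\,dt,\qquad \lambda_1,\lambda_2\ge 0.
\end{equation*}
Substituting $\lambda_1=|\xi|$, $\lambda_2=|\eta|$ into the definition of $T_m$, and interchanging the order of integration (first for $f,g$ Schwartz, then by density), I obtain the pointwise identity
\begin{equation*}
T_m(f,g)(x)=\int_{0}^{\infty}\int_{0}^{\infty}\partial_s\partial_t m_0(s,t)\, (S_sf)(x)\,(S_tg)(x)\,ds\,dt,
\end{equation*}
where $S_r$ denotes the disc multiplier $\widehat{S_rh}(\xi)=\mathbf 1_{B(0,r)}(\xi)\widehat{h}(\xi)$.

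From here, taking absolute values, integrating in $x$, applying Cauchy--Schwarz in $x$ inside the $(s,t)$ integral, and using the trivial Plancherel bound $\|S_rh\|_{L^2}\le\|h\|_{L^2}$ give
\begin{equation*}
\|T_m(f,g)\|_{L^1(\R^n)}\le \int_{0}^{\infty}\int_{0}^{\infty}|\partial_s\partial_t m_0(s,t)|\,\|S_sf\|_{L^2}\|S_tg\|_{L^2}\,ds\,dt\le\|\partial_{\lambda_1}\partial_{\lambda_2}m_0\|_{L^1(\R^2)}\|f\|_{L^2}\|g\|_{L^2},
\end{equation*}
which is exactly the claim.

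The key conceptual point is that the radial structure $m_0(|\xi|,|\eta|)$ lets one ``decouple'' the two frequency variables on the level of the multiplier, so the disc multiplier (which is $L^p$-unbounded for $p\ne 2$ by Fefferman's theorem, but trivially $L^2$-bounded) is all that one needs to invoke. The main technical step, and the one I expect to be the only subtlety, is justifying the Fubini interchange that produces the representation of $T_m(f,g)$ as an integral average of $(S_sf)(S_tg)$; this is handled by a standard density argument, first proving the identity for Schwartz $f,g$ and then passing to the limit using the uniform $L^2$ bounds on $S_s,S_t$ and the assumed integrability of $\partial_s\partial_t m_0$.
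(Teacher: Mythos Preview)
The paper does not prove this lemma itself; it is simply quoted from \cite{Bern1} and then applied. Your argument is correct and is exactly the standard proof given in that reference: represent $m_0$ via its mixed partial as a superposition of tensor products of indicators, so that $T_m(f,g)$ becomes an $L^1$-average of the products $(S_sf)(S_tg)$, and conclude by Cauchy--Schwarz together with the trivial Plancherel bound $\|S_r\|_{L^2\to L^2}\le 1$.
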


We shall verify the hypothesis of the above lemma to deduce the desired result. Note that we have 
 $$\partial_{\lambda_{1}}\partial_{\lambda_{2}}m_{0}(\lambda_{1},\lambda_{2})=4z(z-1)\lambda_{1}\lambda_{2}(1- \lambda_{1}^{2}-\lambda_{2}^{2})^{z-2}_{+}\log(1- \lambda_{1}^{2}- \lambda_{2}^{2})_{+} + 4 \lambda_{1}\lambda_{2}(2z-1)(1- \lambda_{1}^{2}- \lambda_{2}^{2})^{z-2}_{+}.$$
 Therefore,
\begin{eqnarray*}
\int_{\mathbb{R}^{2}}\vert \partial_{\lambda_{1}}\partial_{\lambda_{2}}m_{0}(\lambda_{1},\lambda_{2})\vert d\lambda_{1}d \lambda_{2}
&\lesssim & \int_{B(0,1)}(1- \lambda_{1}^{2}-\lambda_{2}^{2})^{Re(z)-2}\vert\log(1- \lambda_{1}^{2}- \lambda_{2}^{2})\vert d\lambda_{1}d\lambda_{2}\\
& & +\int_{B(0,1)}(1- \lambda_{1}^{2}-\lambda_{2}^{2})^{Re(z)-2}d\lambda_{1}d \lambda_{2}\\
& \lesssim &  \int_{0}^{1}(1-u)^{Re(z)-2}du+ \int_{0}^{1}(1-u)^{Re(z)-2}\log(1-u)du \\
&  \lesssim & 1+ \int_{\frac{1}{2}}^{1}(1-u)^{Re(z)-2}du + \int_{\frac{1}{2}}^{1}(1-u)^{Re(z)-2}\log(1-u)du.
\end{eqnarray*}
Since $Re(z)>1,$ the above integrals are finite.
This completes the proof. 
\qed


\section{Weighted estimates for the bilinear Bochner-Riesz operator}\label{pfmain}

As mentioned previously our proofs are motivated from the ideas given in~\cite{XQ, Bern1}. The key ideas of the proof consist of the analytic interpolation for operators and the multi-linear extrapolation theorem. 

First, we shall observe that weighted estimates for the bilinear operator $\mathcal B^{z}$ hold when $Re(z)>n-\frac{1}{2}.$ Note that using the estimate on the kernel of the operator $\mathcal B^{z}$, the following   pointwise estimate holds 
\begin{eqnarray*}
|\mathcal B^{z}(f,g)(x)| &\lesssim & M(f)(x)M(g)(x),~
Re(z)>n-\frac{1}{2}.
\end{eqnarray*}
See~\cite{Bern1} for details. However, this does not yield weighted estimates for the operator $\mathcal B^{z}$ with respect to bilinear weights. We point out in the following lemma that pointwise domination of $\mathcal B^{z}(f,g)$ by the bilinear maximal function $\mathcal M(f,g)$ holds when $Re(z)>n-\frac{1}{2}$

\begin{lemma}
Let $n\geq 1$ and  $Re(z)>n-\frac{1}{2}$. Then the inequality 
$$|\mathcal {B}^{z}(f,g)(x)|\leq C_{n+Re(z)} e^{\mathfrak{C}\vert Im(z)\vert^{2}}\mathcal{M}(f,g)(x)$$ holds for all $f,g\in L^{1}_{loc}(\mathbb{R}^{n})$ and an absolute positive constant $C=C(n+Re(z))$ depending on the dimension $n$.

\end{lemma}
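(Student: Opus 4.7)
The plan is to exploit the explicit radial formula for $K_z(y,z)=c_{n+z}J_{z+n}(2\pi|(y,z)|)/|(y,z)|^{z+n}$ on $\R^{2n}$ together with a dyadic decomposition of $\R^{2n}$ into spherical shells, and to observe that a shell of radius $\sim 2^k$ in $\R^{2n}$ sits inside the product $B(0,2^k)\times B(0,2^k)\subset\R^n\times\R^n$, which is exactly the structure needed to produce the same-scale bilinear average $\fint_{B(x,2^k)}|f|\,\fint_{B(x,2^k)}|g|$.

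First, I would write
\begin{equation*}
|\mathcal B^{z}(f,g)(x)|\le \sum_{k\in\Z}\int\!\!\int_{A_k}|K_z(y,w)|\,|f(x-y)|\,|g(x-w)|\,dy\,dw,
\end{equation*}
where $A_k=\{(y,w)\in\R^{2n}:2^{k-1}<|(y,w)|\le 2^k\}$. Next, using the standard asymptotics of the Bessel function $J_{z+n}$ (large argument $\sim r^{-1/2}$, small argument $\sim r^{z+n}$), together with the uniform bounds in the order which contribute the factor $e^{\mathfrak C|Im(z)|^2}$, I obtain
\begin{equation*}
|K_z(y,w)|\lesssim C_{n+Re(z)}e^{\mathfrak C|Im(z)|^2}\min\{1,|(y,w)|^{-(n+Re(z)+1/2)}\},
\end{equation*}
which gives an $L^\infty$ bound on $A_k$ of order $\min\{1,2^{-k(n+Re(z)+1/2)}\}$.

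Using $A_k\subset B_{\R^n}(0,2^k)\times B_{\R^n}(0,2^k)$ together with $|B(0,2^k)|\sim 2^{kn}$, each piece is controlled by
\begin{equation*}
C_{n+Re(z)}e^{\mathfrak C|Im(z)|^2}\,\min\{1,2^{-k(n+Re(z)+1/2)}\}\cdot 2^{2kn}\,\mathcal M(f,g)(x).
\end{equation*}
Summing: for $k\le 0$ the bound is $2^{2kn}$, summable automatically; for $k\ge 0$ the bound is $2^{k(n-Re(z)-1/2)}$, which is summable precisely under the hypothesis $Re(z)>n-\tfrac12$. Combining both ranges of $k$ yields the claimed pointwise inequality with constant $C_{n+Re(z)}e^{\mathfrak C|Im(z)|^2}$.

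The only technical point — hardly an obstacle, but the one place where care is needed — is producing the Bessel asymptotic with an exponential-in-$|Im(z)|^2$ dependence on the complex order uniformly in the argument, so that both the large-$r$ and small-$r$ regimes contribute the same type of exponential factor; this is standard and the constant $\mathfrak C$ is the one already appearing in Lemma~\ref{keylem1}. Everything else is a book-keeping of geometric sums, and the critical threshold $Re(z)=n-\tfrac12$ appears in exactly the expected way from the exponent balance $n-Re(z)-\tfrac12<0$.
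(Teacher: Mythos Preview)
Your proof is correct and follows essentially the same approach as the paper: both use the Bessel-function kernel bound $|K_z(y,w)|\lesssim C_{n+Re(z)}e^{\mathfrak C|Im(z)|^2}(1+|(y,w)|)^{-(n+Re(z)+1/2)}$, a dyadic decomposition, and the inclusion of each dyadic piece into a product of equal-radius balls to produce $\mathcal M(f,g)(x)$. The only cosmetic difference is that you decompose $\R^{2n}$ into single-index annuli $A_k$, whereas the paper decomposes $\R^n\times\R^n$ into a double sum over products $(2^jB\setminus 2^{j-1}B)\times(2^kB\setminus 2^{k-1}B)$; both summations converge under the same condition $Re(z)>n-\tfrac12$.
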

\begin{proof}
Let $z=\alpha+\iota t$, with $\alpha=n-\frac{1}{2}+\epsilon,~\epsilon>0$. We have 
$$\mathcal {B}^{z}(f,g)(x)=\int_{\R^n}\int_{\R^n} K_{z}(y_{1},y_{2})f(x-y_{1})g(x-y_{2})dy_{1}dy_{2},$$
where  $K_{z}(y) = \frac{\Gamma(\alpha+1+\iota t)}{\pi^{\alpha+\iota t}}\frac{J_{\alpha+n+\iota t}(2\pi \vert y\vert)}{\vert y\vert^{\alpha+n+\iota t}},~y=(y_{1},y_{2})$.

Invoking standard estimates of Bessel functions, the following kernel estimate holds.
$$\vert K_{z}(y_{1},y_{2})\vert\leq \frac{C(n+\alpha+\iota t)}{(1+ \vert y\vert)^{\alpha+n+\frac{1}{2}}},$$
where~ $C(n+\alpha+\iota t)\leq C_{n+\alpha}e^{\mathfrak{C}\vert Im(z)\vert^{2}}.$

Therefore, we have

\begin{eqnarray*}
\vert \mathcal {B}^{z}(f,g)(x)\vert
&\leq & C_{n+\alpha} e^{\mathfrak{C}\vert Im(z)\vert^{2}} \int_{\mathbb{R}^{n}}\int_{\mathbb{R}^{n}}\frac{\vert f(x-y_{1})\vert \vert g(x-y_{2})\vert}{(1+\vert y_{1}\vert+\vert y_{2}\vert)^{2n+\epsilon}}dy_{1}dy_{2}\\
&=& C_{n+\alpha} e^{\mathfrak{C}\vert Im (z)\vert^{2}}
\sum_{j=0}^{\infty}\sum_{k=0}^{\infty}\int_{2^jB \setminus 2^{j-1}B} \int_{2^kB \setminus 2^{k-1}B}\frac{\vert f(x-y_{1})\vert \vert g(x-y_{2})\vert}{(1+\vert y_{1}\vert+\vert y_{2}\vert)^{2n+\epsilon}}dy_{1}dy_{2},
\end{eqnarray*}
where $B$ denotes the unit ball in $\R^n$ and for $\lambda>0,$ the set $\lambda B$ denotes the dilated ball in $\R^n$ with center at the origin and radius $\lambda.$ With a slight abuse of notation, in the above expression, $2^kB \setminus 2^{k-1}B$ should be thought of as $B$ when $k=0.$ 
Because of symmetry, it is enough to estimate terms with $j\leq k$. Let us consider one such term.  

\begin{eqnarray*}
& & \int_{2^jB \setminus 2^{j-1}B} \int_{2^kB \setminus 2^{k-1}B}\frac{\vert f(x-y_{1})\vert \vert g(x-y_{2})\vert}{(1+\vert y_{1}\vert+\vert y_{2}\vert)^{2n+\epsilon}}dy_{1}dy_{2} \\
&\lesssim & \frac{1}{(2^k)^{2n+\epsilon}}\int_{\vert y_{1}\vert\leq 2^{k+1}}\int_{\vert y_{2}\vert\leq 2^{k+1}}\vert f(x-y_{1})\vert \vert g(x-y_{2})\vert dy_{1}dy_{2}\\
&\lesssim & 2^{-\frac{(k+j)\epsilon}{2}} \mathcal M(f,g)(x). 
\end{eqnarray*}
Therefore, one can sum up with respect to $k$ and $j$. 
This completes the proof. 
\end{proof}
The pointwise estimate in the above lemma along with the weighted estimates for the bilinear maximal function yield the following. 

\begin{corollary}\label{cor:weigh} Let $n\geq 1$ and $z\in \mathbb C$ be such that $Re(z)>n-\frac{1}{2}.$ Then the operator $\mathcal {B}^{z}$ is bounded from $L^{p_{1}}(\omega_{1})\times L^{p_{2}}(\omega_{2})\rightarrow L^{p}(v_{\omega})$ for all $\vec{\omega}\in A_{\vec{P}}$ with $1<p_{1}, p_{2}<\infty$ and $\frac{1}{p_{1}}+\frac{1}{p_{2}}=\frac{1}{p}$.
\end{corollary}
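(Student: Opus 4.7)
The plan is to deduce the corollary directly from the two ingredients already in place: the pointwise domination proved in the preceding lemma and the weighted boundedness of the bilinear maximal operator from Theorem~\ref{bmf}. Concretely, since $\mathrm{Re}(z) > n-\tfrac{1}{2}$, the lemma yields the pointwise bound
$$|\mathcal{B}^{z}(f,g)(x)| \leq C_{n+\mathrm{Re}(z)}\, e^{\mathfrak{C}|\mathrm{Im}(z)|^{2}}\, \mathcal{M}(f,g)(x)$$
for all $x \in \mathbb{R}^n$, with a constant independent of $f$ and $g$. The right-hand side depends on $z$ only through the exponential prefactor.

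Given $\vec{\omega} \in A_{\vec{P}}$ with $1 < p_1, p_2 < \infty$, Theorem~\ref{bmf} asserts that $\mathcal{M}$ is bounded from $L^{p_1}(\omega_1) \times L^{p_2}(\omega_2)$ into $L^{p}(v_{\omega})$. Thus, taking the $L^{p}(v_{\omega})$-norm of both sides of the pointwise estimate and invoking the monotonicity of the norm, we obtain
$$\|\mathcal{B}^{z}(f,g)\|_{L^{p}(v_{\omega})} \lesssim C_{n+\mathrm{Re}(z)}\, e^{\mathfrak{C}|\mathrm{Im}(z)|^{2}}\, \|\mathcal{M}(f,g)\|_{L^{p}(v_{\omega})} \lesssim \|f\|_{L^{p_1}(\omega_1)}\, \|g\|_{L^{p_2}(\omega_2)},$$
which is exactly the claimed bilinear weighted estimate.

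There is essentially no obstacle to overcome at this stage: the substantive analytical work, namely the dyadic annular decomposition of the Bessel kernel leading to the domination by $\mathcal{M}(f,g)$, was already carried out in the lemma just above, and the weighted boundedness of $\mathcal{M}$ under the $A_{\vec{P}}$ condition is precisely the content of the cited theorem of Lerner et al. The only point worth emphasising is that the explicit dependence of the constant on $\mathrm{Im}(z)$ (exponential growth of order two) is preserved throughout, which will be crucial later when the corollary is fed into the analytic interpolation scheme of Grafakos--Mastyło to handle the critical index $\alpha = n - \tfrac{1}{2}$.
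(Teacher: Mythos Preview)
Your proposal is correct and is essentially the same as the paper's own argument: the paper states the corollary immediately after the pointwise domination lemma, noting only that ``the pointwise estimate in the above lemma along with the weighted estimates for the bilinear maximal function yield the following,'' which is exactly the combination of the lemma with Theorem~\ref{bmf} that you spell out.
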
 
As mentioned previously, we exploit the ideas from~\cite{XQ} to prove Theorem~\ref{mainthm1}. In the process, we need to prove certain estimates on the bilinear Bochner-Riesz operator $\mathcal B^{z}$ for some index $z$ with $\text{Re}(z)<n-\frac{1}{2}.$ 
\subsection{Proof of Theorem~\ref{mainthm1} }

We shall consider the  cases $n=1$ and $n\geq 2$ separately and provide a detailed proof for the latter case. The other case follows similarly.  

Let us consider the case when $n\geq 2$. 

For $\epsilon_1,\epsilon_2>0,$ and $N\in \mathbb N,$ consider the operator 
$$\tilde{\mathcal {B}}^{z,\epsilon_{1},\epsilon_{2},N}(f,g)(x)=\mathcal {B}^{(1+\epsilon_{1})(1-z)+z(n-\frac{1}{2}+\epsilon_{2})}(f,g)(x)(v_{N}(x))^{z}e^{A z^{2}},$$ 
such that $A>\mathfrak{C}$ and $v_{N}(x)$ is defined by

$$ v_{N}(x)= \begin{cases} v_{\omega}(x), & \mbox{if }  v_{\omega}(x)\leq N \\ N, & \mbox{if }v_{\omega}(x)>N. \end{cases}$$
Note that $v_{N}(x)\leq v_{\omega}(x)$, for a.e. $x$.
The parameters $\epsilon_1$ and $\epsilon_2$ will be chosen suitably at a later stage. 

Let $f,g$ be compactly supported positive smooth functions and $h$ be a bounded function. Further, let  $\delta_{0}>0$ be an arbitrarily small number and set 

$$f^{z}_{\delta_{0}}(x)=f(x)(\omega_{1}(x)+\delta_{0})^{-\frac{z}{2}}$$ and $$g^{z}_{\delta_{0}}(x)=g(x)(\omega_{2}(x)+\delta_{0})^{-\frac{z}{2}}$$

We use the duality for the operator under consideration and define the following function 
\begin{eqnarray*}
 \psi(z)&=&\int_{\mathbb{R}^{n}}\tilde{\mathcal {B}}^{z,\epsilon_{1},\epsilon_{2},N}(f^{z}_{\delta_{0}},g^{z}_{\delta_{0}})(x)h(x)dx \\ &=&\int_{\mathbb{R}^{n}}\mathcal {B}^{(1+\epsilon_{1})(1-z)+z(n-\frac{1}{2}+\epsilon_{2})}(f^{z}_{\delta_{0}},g^{z}_{\delta_{0}})(x)(v_{N}(x))^{z}h(x)e^{A z^{2}}dx,
\end{eqnarray*}
where  $0\leq Re(z)\leq 1$. 

Apply Lemma \ref{keylem1} to see that the function $\psi$ is analytic in the strip $S=\lbrace z\in \mathbb{C} : 0<Re(z)<1\rbrace$,  bounded and continuous on the closure $\bar{S}=\lbrace z\in \mathbb{C} : 0\leq Re(z)\leq 1\rbrace$. Consequently, the ``Three lines lemma" from complex analysis yields
\begin{eqnarray*}
\vert \psi(\theta)\vert
&\leq & C\left(\sup_{t\in \mathbb{R}}\vert \psi(\iota t)\vert\right)^{1-\theta}\left(\sup_{t\in \mathbb{R}}\vert \psi(1+\iota t)\vert\right)^{\theta},~~\theta\in (0,1).
\end{eqnarray*}

Note that, 
\begin{eqnarray*} \psi(i t)
&=& \int_{\R^n} \mathcal {B}^{(1+\epsilon_{1})(1-i t)+i t(n-\frac{1}{2}+\epsilon_{2})}\left(f(\omega_{1}+\delta_{0})^{\frac{-i t}{2}}, g(\omega_{2}+\delta_{0})^{\frac{-i t}{2}} \right)(x)(v_{N}(x))^{i t}h(x)e^{-A t^{2}}dx
\end{eqnarray*} 
Therefore, we have 
 \begin{eqnarray*}
  \sup_{t\in\mathbb{R}}\vert \psi(i t)\vert 
  &\leq &  \Vert h\Vert_{L^{\infty}}\sup_{t\in\mathbb{R}}e^{-At^{2}}\int_{\R^n} \vert \mathcal {B}^{(1+\epsilon_{1})(1-i t)+i t(n-\frac{1}{2}+\epsilon_{2})}\left(f(\omega_{1}+\delta_{0})^{\frac{-i t}{2}}, g(\omega_{2}+\delta_{0})^{\frac{-i t}{2}} \right)(x)\vert
  \end{eqnarray*}
Next observe that, 
$$\text{Re}[(1+\epsilon_{1})(1-i t)+i t(n-\frac{1}{2}+\epsilon_{2})]=1+\epsilon_{1}>1.$$
By Lemma~\ref{keylem1} we get the following
\begin{eqnarray*}
\sup_{t\in\mathbb{R}}\vert \psi(i t)\vert &\leq &
 C\Vert h\Vert_{L^{\infty}}\sup_{t\in\mathbb{R}}e^{-(A-\mathfrak{C})t^{2}}\left(\int_{\R^n} \vert f(\omega_{1}+\delta_{0})^{-\frac{i t}{2}}\vert^{2} dx \right)^{\frac{1}{2}}\left(\int_{\R^n} \vert g(\omega_{2}+\delta_{0})^{-\frac{i t}{2}}\vert^{2} dx \right)^{\frac{1}{2}}\\
&\leq & C_{\epsilon_{1},\epsilon_{2}}\Vert h\Vert_{L^{\infty}(\mathbb{R}^{n})}\Vert f\Vert_{L^{2}(\mathbb{R}^{n})}\Vert g\Vert_{L^{2}(\mathbb{R}^{n})}.
\end{eqnarray*}
In the above inequality, we have used that $A>\mathfrak{C}.$
  
In a similar fashion, we get the following estimate on $\psi(z)$ for $z=1+it$. 
\begin{eqnarray*}
& & \sup_{t\in\mathbb{R}} \vert \psi(1+i t)\vert\\
& \leq & \Vert h\Vert_{L^{\infty}(\mathbb{R}^{n})}\sup_{t\in \mathbb{R}}e^{A(1-t^{2})}\int_{\R^n} \vert \mathcal {B}^{(1+\epsilon_{1})(-i t)+(1+i t)(n-\frac{1}{2}+\epsilon_{2})}\left(f(\omega_{1}+\delta_{0})^{-\frac{1+i t}{2}}, g(\omega_{2}+\delta_{0})^{-\frac{1+i t}{2}} \right)(x)\vert v_{N}(x)dx\\
& \leq & \Vert h\Vert_{L^{\infty}(\mathbb{R}^{n})}\sup_{t\in \mathbb{R}}e^{A(1-t^{2})}\int_{\R^n} \vert \mathcal {B}^{(1+\epsilon_{1})(-i t)+(1+i t)(n-\frac{1}{2}+\epsilon_{2})}\left(f(\omega_{1}+\delta_{0})^{-\frac{1+i t}{2}}, g(\omega_{2}+\delta_{0})^{-\frac{1+i t}{2}} \right)(x)\vert v_{\omega}(x)dx.
\end{eqnarray*} 
Above we have used the fact that $v_{N}(x)\leq v_{\omega}(x)$, for a.e. $x$. 
In this case, 
$$\text{Re}[(1+\epsilon_{1})(-i t)+(1+i t)(n-\frac{1}{2}+\epsilon_{2})]=n-\frac{1}{2}+\epsilon_{2}>n-\frac{1}{2}.$$
We invoke Corollary~\ref{cor:weigh} to get the following estimate
 \begin{eqnarray*}
 & & \sup_{t\in\mathbb{R}}\vert \psi(1+i t)\vert  \\
 &\leq & C_{\epsilon_{1},\epsilon_{2}}\Vert h\Vert_{L^{\infty}}\sup_{t\in\mathbb{R}}e^{-(A-\mathfrak C)t^{2}}\left(\int_{\R^n} \vert f(\omega_{1}+\delta_{0})^{-\frac{1+i t}{2}}\vert^{2}\omega_{1}(x) dx \right)^{\frac{1}{2}}\left(\int_{\R^n} \vert g(\omega_{2}+\delta_{0})^{-\frac{1+i t}{2}}\vert^{2}\omega_{2}(x) dx \right)^{\frac{1}{2}}\\
& \leq & C_{\epsilon_{1},\epsilon_{2}}\Vert h\Vert_{L^{\infty}(\mathbb{R}^{n})}\Vert f\Vert_{L^{2}(\mathbb{R}^{n})}\Vert g\Vert_{L^{2}(\mathbb{R}^{n})}
\end{eqnarray*}
In the above we have used the fact that $(\omega_{j}+\delta_{0})^{-1}\leq \omega^{-1}_{j}$, for $j=0,1$. The three lines lemma along with the above estimates on $\psi$ at the boundary of the strip $S$ yields the following 
\begin{eqnarray}\label{three}
\vert\psi(\theta)\vert
&\leq & C_{\epsilon_{1},\epsilon_{2}}\Vert h\Vert_{L^{\infty}(\mathbb{R}^{n})}\Vert f\Vert_{L^{2}(\mathbb{R}^{n})}\Vert g\Vert_{L^{2}(\mathbb{R}^{n})},~~0<\theta<1.
\end{eqnarray} 
Since
\begin{eqnarray*}
\psi(\theta)
&=&\int_{\mathbb{R}^{n}}\mathcal {B}^{(1+\epsilon_{1})(1-\theta)+\theta(n-\frac{1}{2}+\epsilon_{2})}(f^{\theta}_{\delta_{0}},g^{\theta}_{\delta_{0}})(x)(v_{N}(x))^{\theta}h(x)dx \\
&=&\int_{\R^n} \mathcal {B}^{(1+\epsilon_{1})(1-\theta)+\theta(n-\frac{1}{2}+\epsilon_{2})}\left(f(\omega_{1}+\delta_{0})^{\frac{-\theta}{2}}, g(\omega_{2}+\delta_{0})^{\frac{-\theta}{2}} \right)(x)(v_{N}(x))^{\theta}h(x)dx
\end{eqnarray*}
The estimate~(\ref{three}) and standard duality argument give us that  
\begin{eqnarray*}
& & \int_{\R^n}  \vert\mathcal {B}^{(1+\epsilon_{1})(1-\theta)+\theta(n-\frac{1}{2}+\epsilon_{2})}\left(f(\omega_{1}+\delta_{0})^{\frac{-\theta}{2}}, g(\omega_{2}+\delta_{0})^{\frac{-\theta}{2}} \right)(x)\vert(v_{N}(x))^{\theta}dx\\
&\leq & C \left(\int_{\R^n} \vert f(x)\vert^{2} dx\right)^{\frac{1}{2}}\left(\int_{\R^n} \vert g(x)\vert^{2} dx\right)^{\frac{1}{2}}
\end{eqnarray*}
  
The constant $C$ in the above inequality is independent of $N$ and $\delta_{0}$ and hence letting $N\rightarrow \infty$, $\delta_{0}\rightarrow0$ and replacing $f$ and $g$ by  $f\omega_{1}^{\frac{\theta}{2}}$ and $g\omega_{2}^{\frac{\theta}{2}}$ respectively in the above inequality, we get 

\begin{align} \label{esti:wei}
\int_{\R^n} \vert\mathcal {B}^{(1+\epsilon_{1})(1-\theta)+\theta(n-\frac{1}{2}+\epsilon_{2})}\left(f, g \right)(x)\vert(v_{\omega}(x))^{\theta}dx
&\leq & C \left(\int_{\R^n} \vert f(x)\vert^{2}\omega_{1}^{\theta} dx\right)^{\frac{1}{2}}\left(\int_{\R^n} \vert g(x)\vert^{2}\omega_{2}^{\theta} dx\right)^{\frac{1}{2}},
\end{align}
where $0<\theta<1$ . 

Note that the above estimate~(\ref{esti:wei}) holds for all bilinear weights $\vec{\omega}\in A_{\vec{P}}$. 

Recall Lemma~\ref{prop:wei} which says that for any $\vec{\omega}\in A_{\vec{P}}$, there exists $\delta>0$ such that $\vec{\omega}_{\delta}=(\omega_{1}^{1+\delta},\omega_{2}^{1+\delta})\in A_{\vec{P}}$. 

For a given bilinear weight $\vec{\omega}\in A_{\vec{P}}$, the estimate ~(\ref{esti:wei}) holds true for the weight $\vec{\omega}_{\delta}\in A_{\vec{P}}$ with the choice $\theta=\frac{1}{1+\delta}$ and consequently we get the following

\begin{eqnarray}\label{critical}
\Vert \mathcal {B}^{\lambda}(f,g)\Vert_{L^{1}(v_{\omega})}
&\leq & C \Vert f\Vert_{L^{2}(\omega_{1})}\Vert g\Vert_{L^{2}(\omega_{2})},
\end{eqnarray} 
where $\lambda=(1+\epsilon_{1})(1-\frac{1}{1+\delta})+\frac{1}{1+\delta}(n-\frac{1}{2}+\epsilon_{2})$.

Finally, since $n\geq 2,$ we can choose $\epsilon_1$ and $\epsilon_2$ appropriately so that $\lambda=n-\frac{1}{2}.$ 
 
This completes the proof of Theorem~\ref{mainthm1} when $n\geq 2$. 
 
 As mentioned previously, the remaining case, i.e., the case $n=1$, may be completed similarly with the following modifications. 
 
 In this case, we consider the operator 
 
\begin{eqnarray*}
\tilde{\mathcal {B}}^{z,\epsilon_{1},\epsilon_{2},N}(f,g)(x)
&=&\mathcal {B}^{\epsilon_{1}(1-z)+z(n-\frac{1}{2}+\epsilon_{2})}(f,g)(x)(v_{N}(x))^{z}e^{A z^{2}}
\end{eqnarray*} 
We follow the argument as used in the  previous case along with Lemma~\ref{keylem1} for $n=1$. This will lead to estimate ~(\ref{critical}) with $\lambda=\epsilon_{1}(1-\frac{1}{1+\delta})+\frac{1}{1+\delta}(\frac{1}{2}+\epsilon_{2})$. Again, it is easy to see that one can choose 
$\epsilon_1$ and $\epsilon_2$ appropriately so that $\lambda=\frac{1}{2}.$

This completes the proof of Theorem~\ref{mainthm1} for $1<p_{1},p_{2}<\infty$. 

\section*{Acknowledgement} The authors would like to thank the referee for valuable suggestions. The second author was supported by Science and Engineering Research Board (SERB), Government of India, under the grant MATRICS: MTR/2017/000039/Math. The third  author is supported by CSIR (NET), file no. 09/1020 (0094)/2016-EMR-I. 

\bibliographystyle{plain}

\begin{thebibliography}{99}
\bibitem {FP} F. Bernicot; P. Germain, {\it Boundedness of bilinear multipliers whose symbols have a narrow support.}  J.Anal. Math. 119, 165--212 (2013).

\bibitem {Bern1} F. Bernicot; L. Grafakos; L. Song; L. Yan, {\it The bilinear Bochner-Riesz problem.} J. Anal. Math. 127, 179--217 (2015). 
 
\bibitem {B1} J. Bourgain,  {\it Besicovitch type maximal operators and applications to Fourier analysis.} Geom. Funct. Anal. 1, 147--187 (1991)

\bibitem{B2} J. Bourgain; L. Guth,  {\it Bounds on oscillatory integral operators based on multiplier estimates.} Geom. Funct. Anal. 21, 1239--1295 (2011).


\bibitem{MC} M. Christ, {\it Weak-type $(1,1)$ bounds for rough operators.} Ann. Math. 128 (1988), 19--42.

\bibitem{Di} J. M. Conde-Alonso; A. Culiuc; F. Di Plinio; Y. Ou, {\it A sparse domination principle for rough singular integrals.} Anal. PDE 10 (2017), no. 5, 1255--1284. 
\bibitem {F1} C. Fefferman, {\it Inequalities for strongly singular convolution operators.} Acta Math. 124, 9--36 (1970).

\bibitem {DG} G. Diestel; L. Grafakos, {\it  Unboundedness of the ball multiplier operator.} Nagoya  Math. J. 185, 151--159 (2007).

\bibitem {F2} C. Fefferman, {\it The multiplier problem for the ball.} Ann. Math.(2) 94, 330--336 (1971).

\bibitem{Grafakosmodern} L. Grafakos, {\it Modern Fourier analysis,} Third edition. Graduate Texts in Mathematics, 250. Springer, New York, 2014. 

\bibitem {GLi} L. Grafakos; X. Li, {\it  The disc as a bilinear multiplier.} Amer. J. Math. 128, 91--119 (2006).

\bibitem{Gra2} L. Grafakos; D. He; P. Hon\'{z}ik,  {\it Maximal operators associated with bilinear multipliers of limited decay.}  arXiv:1804.08527 [math.CA].

\bibitem {LM} L. Grafakos; M. Mastylo, {\it  Analytic families of multi-linear operators; Nonlinear Analysis.} 107 (2014).

\bibitem{DHe} D. He, {\it On the bilinear maximal Bochner-Riesz operators.} arXiv:1607.03527 [math.CA].

\bibitem{EJeong} E. Jeong; S. Lee, {\it Maximal estimates for the bilinear spherical averages and the bilinear Bochner-Riesz operators,} arXiv: 1903.07980v3, 14 Nov. 2019.

\bibitem {Lee2} E. Jeong; S. Lee; A. Vargas, {\it Improved bound for the bilinear Bochner-Riesz operator.} Math. Ann. 372 (2018), no. 1--2, 581--609.


\bibitem {Lee1} S. Lee, {\it Improved bounds for Bochner-Riesz and maximal Bochner-Riesz operators.} Duke Math. J. 122, 205--232 (2004).

\bibitem {Ler1} A. K. Lerner; S. Ombrosi; C. Perez; R.H. Torres; R. Trujillo-Gonzalez, New maximal functions and multiple weights for the multi-linear Calderon-Zygmund theory; Adv. Math. 220 (2009) 1222-1264.

\bibitem{KJHS} K. Li; J. M. Martell; H. Martikainen; S. Ombrosi; E. Vuorinen, {\it End-point estimates, extrapolation for multilinear Muckenhoupt classes, and applications,} arXiv: 1902.04951v1.

\bibitem {KJS} K. Li; J.M. Martell; S. Ombrosi, Extrapolation for multi-linear Muckenhoupt classes and applications to the bilinear Hilbert transform; arXiv: 1802.03338v2 [math.CA] 15th March, 2018.


\bibitem {HM} H. Liu; M. Wang, Boundedness of the bilinear Bochner-Riesz means in the non-Banach triangle case; arXiv: 1712.09235v1 [math.FA], 2017.  

\bibitem{Bas} B. Nieraeth, {\it Quantitative estimates and extrapolation for multilinear weight classes,} Math. Ann. 375 (2019), no. 1-2, 453-507.

\bibitem{Seeger} A. Seeger, Singular integral operators with rough convolution kernels; J. Amer. Math. Soc. 9 (1996), no. 1, 95--105. MR 1317232

\bibitem {XQ} X. Shi; Q. Sun, {\it Weighted norm inequalities for Bochner-Riesz operators and singular integral operators.}  Proc. Amer. Math. Soc., Vol. 116 (1992), no. 3, 665--673. MR 1136237

\bibitem{St1} E. M. Stein; G. Wiess, {\it Introduction to Fourier analysis on euclidean spaces.} Princeton University Press, 1971.  

\bibitem{tao} T. Tao, {\it Non-linear dispersive equations. Local and global analysis.} CBMS Regional Conference Series in Mathematics, 106, Amer. Math. Soc. Providence, RI, Washington, DC (2006).


\bibitem{AV} A. Vargas, {\it Weighted weak-type $(1,1)$ bounds for rough operators.} J. London Math. Soc. (2) 54 (1996), n. 2, 297--310. MR 1047758
\end{thebibliography}

\end{document}